
\documentclass[10pt,letterpaper]{article}
\usepackage[top=0.85in,left=2.75in,footskip=0.75in]{geometry}

% amsmath and amssymb packages, useful for mathematical formulas and symbols
\usepackage{amsmath,amssymb,amsthm}

% Use adjustwidth environment to exceed column width (see example table in text)
\usepackage{changepage}

% Use Unicode characters when possible
\usepackage[utf8]{inputenc}

% textcomp package and marvosym package for additional characters
\usepackage{textcomp,marvosym}

% cite package, to clean up citations in the main text. Do not remove.
\usepackage{cite}

% Use nameref to cite supporting information files (see Supporting Information section for more info)
\usepackage{nameref,hyperref}

% line numbers
%\usepackage[right]{lineno}

% ligatures disabled
\usepackage{microtype}
\DisableLigatures[f]{encoding = *, family = * }

% color can be used to apply background shading to table cells only
\usepackage[table]{xcolor}

% array package and thick rules for tables
\usepackage{array}

\usepackage{multirow, booktabs, graphicx}

% create "+" rule type for thick vertical lines
\newcolumntype{+}{!{\vrule width 2pt}}

% create \thickcline for thick horizontal lines of variable length
\newlength\savedwidth

% \thickhline command for thick horizontal lines that span the table

% Remove comment for double spacing
%\usepackage{setspace} 
%\doublespacing

% Text layout
\raggedright
\setlength{\parindent}{0.5cm}
\textwidth 5.25in 
\textheight 8.75in

% Bold the 'Figure #' in the caption and separate it from the title/caption with a period
% Captions will be left justified
\usepackage[aboveskip=1pt,labelfont=bf,labelsep=period,justification=raggedright,singlelinecheck=off]{caption}

% Use the PLoS provided BiBTeX style
\bibliographystyle{plos2015}

% Remove brackets from numbering in List of References
\makeatletter
\renewcommand{\@biblabel}[1]{\quad#1.}
\makeatother

% Header and Footer with logo
\usepackage{lastpage}
\usepackage{fancyhdr,graphicx}
\usepackage[outdir=./]{epstopdf}
\pagestyle{myheadings}
\pagestyle{fancy}
\fancyhf{}
%\setlength{\headheight}{27.023pt}
%\lhead{\includegraphics[width=2.0in]{PLOS-submission.eps}}
\rfoot{\thepage/\pageref{LastPage}}

\fancyheadoffset[L]{2.25in}
\fancyfootoffset[L]{2.25in}
\lfoot{\today}

%% Include all macros below

%\newcommand{\lorem}{{\bf LOREM}}
%\newcommand{\ipsum}{{\bf IPSUM}}

\newtheorem{thm}{Theorem}
%\newproof{pf}{Proof}
\newtheorem{cor}{Corollary}

%% END MACROS SECTION

\begin{document}
\vspace*{0.2in}

% Title must be 250 characters or less.
\begin{flushleft}
{\Large
\textbf\newline{A parsimonious model of blood glucose homeostasis} % Please use "sentence case" for title and headings (capitalize only the first word in a title (or heading), the first word in a subtitle (or subheading), and any proper nouns).
}
\newline
% Insert author names, affiliations and corresponding author email (do not include titles, positions, or degrees).
\\
Eric Ng\textsuperscript{1,2*},
Jaycee M. Kaufman\textsuperscript{1,2},
Lennaert van Veen\textsuperscript{1},
Yan Fossat\textsuperscript{2}
\\
\bigskip
\textbf{1} Ontario Tech University, Oshawa, Ontario, Canada
\\
\textbf{2} Labs Department, Klick Health, Klick Inc., Toronto, Ontario, Canada
\\
\bigskip

% Insert additional author notes using the symbols described below. Insert symbol callouts after author names as necessary.
% 
% Remove or comment out the author notes below if they aren't used.
%
% Primary Equal Contribution Note
%\Yinyang These authors contributed equally to this work.

% Additional Equal Contribution Note
% Also use this double-dagger symbol for special authorship notes, such as senior authorship.
%\ddag These authors also contributed equally to this work.

% Current address notes
%\textcurrency Current Address: Dept/Program/Center, Institution Name, City, State, Country % change symbol to "\textcurrency a" if more than one current address note
% \textcurrency b Insert second current address 
% \textcurrency c Insert third current address

% Deceased author note
%\dag Deceased

% Group/Consortium Author Note
%\textpilcrow Membership list can be found in the Acknowledgments section.

% Use the asterisk to denote corresponding authorship and provide email address in note below.
* eric.ng@ontariotechu.ca

\end{flushleft}
% Please keep the abstract below 300 words
\section*{Abstract}
The mathematical modelling of biological systems has historically followed one of two approaches: comprehensive and minimal. In comprehensive models, the involved biological pathways are modelled independently, then brought together as an ensemble of equations that represents the system being studied, most often in the form of a large system of coupled differential equations. This approach often contains a very large number of tuneable parameters $(>100)$ where each describes some physical or biochemical subproperty. As a result, such models scale very poorly when assimilation of real world data is needed. Furthermore, condensing model results into simple indicators is challenging, an important difficulty in scenarios where medical diagnosis is required. In this paper, we develop a minimal model of glucose homeostasis with the potential to yield diagnostics for pre-diabetes. We model glucose homeostasis as a closed control system containing a self-feedback mechanism that describes the collective effects of the physiological components involved. The model is analyzed as a planar dynamical system, then tested and verified using data collected with continuous glucose monitors (CGMs) from healthy individuals in four separate studies. We show that, although the model has only a small number $(3)$ of tunable parameters, their distributions are consistent across subjects and studies both for hyperglycemic and for hypoglycemic episodes.

% Please keep the Author Summary between 150 and 200 words
% Use first person. PLOS ONE authors please skip this step. 
% Author Summary not valid for PLOS ONE submissions.   
\section*{Author summary}
We present a model of glucose homeostasis that consists of an equation of the mass-action-kinetics type for glucose levels and a closed propotional-integral control loop. The control loop models the aggregate effect of all physiological components of homeostasis, such as the production and uptake of insulin and glucagon. We study the model as a smooth, planar, dynamical system and show that its solutions are bounded for parameter values that correspond to healthy individuals. We then fit the model parameters to datasets obtained with continuous glucose monitors in four different studies, and show that they have structured distributions for individuals across a healthy population. 

%\linenumbers

% Use "Eq" instead of "Equation" for equation citations.
\section*{Introduction}
\noindent In the 1920s, Walter Cannon condensed a plethora of observations and ideas of early physiologists into the concept of {\em homeostasis}, i.e. the automated regulation of conditions of the human body, such as temperature, blood pressure and heart rate \cite{Cannon1929,Cooper2008}. This regulation serves to counteract disturbances from outside the body and keep its internal conditions in their safe operating range. While a monumental leap forwards philosophically, the concept remained largely qualitative in nature. Cannon carefully based his reasoning on observational evidence, but the latter is often anecdotal or derived from very small-scale experiments with Cannon himself or members of his lab as subjects. Nonetheless, his hypotheses on homeostasis were tantalizingly close to a quantitative description. For instance, he posed that ``If a state remains steady it does so because any tendency towards change is automatically met by increased effectiveness of the factor or factors which resist the change'' -- a notion begging to be translated into a mathematical model. It was not until the 1940s, however, that a connection was made to the theory of feedback control. Particularly prolific in making this connection was Norbert Wiener, who devoted multiple sections of his book on Cybernetics to feedback control in human physiology \cite{Wiener1949}. From similarities between physiological and servo-mechanical feedback control, Wiener derived several interesting ideas on the mechanism of homeostasis and how its breakdown is associated with pathological states such as involuntary tremors.
Over the two decades after the publication of Wiener's book, the study of homeostasis became more firmly grounded in mathematics and experiment. Examples include the work of Corson {\sl et al.} \cite{Corson1966} on body water regulation, that of Stolwijk and Hardy \cite{Stolwijk1968} on body temperature and that of Powell \cite{Powell1972} on plasma calcium regulation. Probably the most active topic of research, however, was the regulation of the blood glucose concentration. The malfunction of this particular feedback subsystem is associated with diabetes. The incidence of type-2 diabetes in the USA doubled in the 1960s \cite{CDCP2017b} and this rapid rise likely explains the research focus of many physiologists, physicists and biochemical engineers entering the new field of homeostasis modelling. The quest to gain a better understanding of glucose homeostasis is ever more urgent, as diabetes now affects more than 30 million people in the USA alone (over 9\% of the population), while another 84 million Americans have prediabetes \cite{CDCP2017a,CDCP2017b}. \newline

\noindent The steady progress in glucose homeostasis modelling, from reasoning by analogy to quantitative, predictive theory, can be explained in terms of major advancements in three directions. First is the gathering of data. Experiments were designed to unveil the interplay of glucose and regulatory hormones like insulin and glucagon, both in humans and in other mammals. An example of early work is that by Metz \cite{Metz1960}, who measured the production of insulin in response to artificially increased or reduced glucose levels in dogs. A few years later, Burns {\sl et al.} \cite{Burns1965} was one of several groups to design a continuous blood glucose monitor that could be used for a number of hours on a human subject in clinical setting. They used the data to model the oral glucose challenge, in which a subject drinks a calibrated amount of glucose after fasting, and the recovery of normal blood glucose levels is monitored. \newline

\noindent Second is the formulation of mathematical models. Informed by the data that were becoming available, various models, both of the innate glucose dynamics and of the feedback through hormones, were formulated and tested. Initially, mostly linear models were considered, for instance by Bolie \cite{Bolie1961}, who used the data of Metz \cite{Metz1960} to estimate parameters such as the rate constants of glucose removal and insulin production. Similar results were obtained by Ackeman {\sl et al.} \cite{Ackerman1965}, who focused on the oral glucose challenge. Apparently frustrated by the sparsity of available data, they revisited the parameter fitting a few years later, finding good agreement with continuous glucose measurements as well \cite{Gatewood1970}. Bergman {\sl et al.} \cite{Bergman1979} systematically compared several linear and nonlinear models, the nonlinearity appearing in the interaction of insulin and glucose. They concluded that the action of insulin on elevated glucose levels is best described by a product of the respective concentrations, akin to mass-action kinetics. This feedback mechanism is now widely used in glucose control models, including ours presented in this paper. \newline

\noindent Third is the availability of digital computers. While, in the early 1960s, Bolie \cite{Bolie1961} built his own analogue computer especially for the purpose of producing numerical predictions with his model, aptly called the ``Hormone Computer'', a decade later digital computers were rapidly becoming more accessible and useful, as demonstrated, for instance, by the work of Ceresa {\sl et al.} \cite{Ceresa1968} and Gatewood {\sl et al.} \cite{Gatewood1970}. Even by the standards of 1970s digital computing, time-stepping glucose homeostasis models in the form of small systems of ordinary differential equations was a light task. The difficulty lay in the computational cost of systematically exploring parameter space to find physiologically sound behaviour -- a process called {\em conformation} by Gatewood {\sl et al.} \cite{Gatewood1970} that would today be considered a form of data assimilation. Indeed, the number of simulations one needs to run to gain a comprehensive overview of model behaviours grows exponentially with the number of parameters to be tuned, such as rate constants and baseline glucose and hormone concentrations. For a model with two variables and four parameters, like Bergman's preferred minimal model \cite{Bergman1979}, we can estimate that conformation of a single peak in glucose concentration must have cost at least one hour on the equipment available at the time, like the {\em IBM System/360 Model 50} used by Ličko and Silvers \cite{Licko1975}. \newline

%Using a clever search algorithm, proposed by Marquardt \cite{Marquardt1963}, conformation proved feasible for the minimal models explored by Bergman \cite{Bergman1979}.

%Gatewood {\sl et al.} \cite{Gatewood1970} used both analogue and digital computers, ``\ldots seeking in each instance to optimize the scientific output per man hour invested, subject of course to the varying financial, hardware, and software restraints.'' The restraints were not so much imposed by the time required for numerically time-stepping a given model as they were   \newline

\noindent In summary, by the 1980s various models glucose homeostasis had been developed and tuned to measured data with the aid of computational algorithms. As a result, much insight was gained in the hormonal regulation of blood glucose and quantitative estimates of, for instance, insulin sensitivity \cite{Bergman1979} and disappearance rate constants of glucose and insulin \cite{Licko1975} were obtained. A more ambitious goal, formulated already by Gatewood {\sl et al.} \cite{Gatewood1970} is to use the tuned parameters as diagnostic tools. Since the model parameters quantify the regulation mechanism itself, rather than the glucose levels it produces, they could conceivably provide an indication of where a test subject stands on the scale from healthy to pre-diabetic and fully diabetic.
While not feasible with the data acquisition and processing techniques of the 1970s, in recent years this goal has come within reach because of rapid progress in the three directions discussed above.\newline

\noindent Most importantly, continuous glucose monitors (CGMs) were developed. CGMs consist of a tiny needle, inserted in the upper arm and connected to a lightweight electronic device that is held in place with adhesive tape. Measurements are taken at fixed intervals and can be read wirelessly by a hand-held receiver. The needle usually causes little or no discomfort and can be worn during every day activities. Thus, we can now observe the complete feedback system at work. The modelling of these data requires a closed-loop control approach. In contrast, the models mentioned above are open-loop in the sense that, rather than a two-way interaction between glucose and 
regulatory hormones -- or a proxy for those -- they contain input terms that represent the injection of glucose or regulatory hormones in clamp or bolus experiments. One might say that pioneers like Bolie, Ackerman and Bergman approached the problem like an electrical engineer would probe a component of an electrical circuit: they isolate it and present it with a series of controlled inputs and measure the response. Closed feedback modelling is more similar to observing the component as it is playing its part in the circuit,
unable to manipulate its input yet assuming it is, in part, determined by its output. Finally, comformation of mathematical models can now be done on the fly on a device as small as a smart watch so that diagnostics may be presented immediately upon the appearance of irregularities. Various authors have recently explored the use of mathematical models conformed to CGM data, often focusing on type-2 diabetes patients (Goel {\sl et al.} \cite{Goel2018}, Gaynanova {\sl et al.} \cite{Gaynanova2020} and Bartlette {\sl et al.} \cite{Bartlette2021}).\newline

\noindent The model we present and validate here can be considered, in the words of Wiener, a ``white box'' model. It is designed to reproduce the correct output, i.e. blood glucose concentration, for given input, in this case glucose released from digestion of food. It comprises only two variables: the deviation of the glucose concentration from a set point and a proxy for all control mechanisms. Thus, we do not model any specific pathway of control or hormone, only their aggregate effect. This idea is similar to that of ``reign control'' by Saunders {\sl et al.} \cite{Saunders1998}, who presented a closed-loop model in which insulin and glucagon concetrations are modelled separately. Our control variable is determined both by the instantaneous glucose concentration and by its recent history, the weight of past glucose concentrations decreasing exponentially with the delay. This ``distributed delay'' approach was proposed by Palumbo and De Gaetano \cite{Palumbo2007}. Thus, our model contains three tunable parameters: the coefficients of the contributions of the instanteneous and the past glucose concentrations and the time scale of the distributed delay. To the best of our knowledge, this model is the most parsimonious among closed feedback models -- Saunders {\sl et al.} \cite{Saunders1998}, for instance, included three variables and six parameters while Palumbo and De Gaetano \cite{Palumbo2007} included four variables and six parameters and Goel {\sl et al.} \cite{Goel2018} use two variables and seven parameters.  \newline

\noindent Since our model takes the form of a mildly nonlinear, planar dynamical
system, it is susceptible to standard analysis. The first goal of this work is to establish that our model has the following desirable properties for physiologically admissible parameter values:
\begin{enumerate}
	\item It has a unique equilibrium solution which is asymptotically stable. This equilibrium can be though of as the target of the homeostatic control and it has a glucose concentration within the safe operating range.
	\item No time-periodic solutions can arise for constant input. Such solutions would correspond to potentially damaging, sustained oscillations of the blood glucose level.
	\item For variable input within reasonable bounds, the model glucose concentration and control variable remain bounded and the maximal glucose concentration lies at the high end of the safe operating range.
\end{enumerate}
Thus, our model has the properties one expects from a healthy homeostatic control system.\newline

\noindent The second goal of this work is to validate the model with CGM data of healthy individuals from three different studies using two different glucose monitors.
\begin{itemize}
\item[1.] Klick Pilot Study. This study was conducted at Klick Health and had $42$ subjects, recruited locally in Toronto, Canada \cite{Klick1}. Preliminary results from this study were reported in van Veen {\sl et al.} \cite{vanVeen2020}.
\item[2.] Klick Follow-up Studies. These studies were conducted at various sites in India and had $146$ subjects in total\cite{Klick2,Klick3}.
\item[3.] Stanford Study. Results from this study were reported by Hall {\sl et al.} \cite{hall2018glucotypes} and the CGM data for $36$ subjects were made publically available.
\end{itemize}
All Klick studies use the Freestyle Libre Flash Glucose Monitoring System (Abbott Laboratories) while the Stanford study used the Dexcom G4.
We will establish, that the parameters of the tuned models are consistent across the different study groups, in the sense that they fall under distributions
that appear to be Gaussian and independent of the equipment used or demographics of the test subjects.\newline

\noindent Thirdly, we will investigate data and model results for hypoglycemic episodes, i.e. excursions to glucose levels below the baseline.
This is an extension of the model used by van Veen {\sl et al.} \cite{vanVeen2020}, which only describes excursions above the base line.
Hypoglycemic episodes have received relatively little attention in the literature. Palumbo and De Gaetano \cite{Palumbo2007} simulated hypoglycemic conditions
in their closed-loop model but did not conform the it to measured data. Data-based approaches have mostly been statistical in nature and aimed at type-2 diabetes patients,
e.g. Yang {\sl et al.} \cite{Yang2019}.\newline

\noindent Finally we will demonstrate that, owing to the parsimony of the model, the conformation of parameters to segments of CGM data can be performed in seconds on a
mid-range laptop computer. The inputs to this procedure are the raw CGM data, often with missing and corrupted measurements, and the output is a list of
tuned parameters, one set for each detected peak or trough. While, in the current work, we focus on the validation of our model for healthy subjects,
this pipeline of data acquisition, feature extraction and parameter conformation has the potential grow into a cheap, non-invasive, online diagnostic tool
once the impact of early-onset diabetes on the model parameters is known.

\section*{Methods and Data}
\subsection*{Glucose homeostasis as a control system}
The dynamics of blood glucose homeostasis and its regulating feedback system are modelled by a system of coupled differential and integral equations. We assume that there are three components contributing to variations in glucose deviation: 1) Base metabolic rate - the rate that glucose is consumed during rest to maintain basic bodily functions, 2) A negative feedback mechanism that regulates blood glucose concentration as it deviates from normal levels, and 3) an input function that describes the external intake of glucose such as those received by eating a meal. The equation is
\begin{equation}\label{edot}
\frac{\mbox{d}e}{\mbox{d}t} = -A_3 - u \phi(e, \bar{e}) + F(t)
\end{equation}
where $e$ is the excess glucose concentration from some set value $\bar{e}$. The base metabolic rate $A_3$ is assumed constant, and $F(t)$ models the external glucose sources (i.e. food intake) and sinks (such as vigorous exercise). The control variable $u$ represents the collective effects of the active mechanisms that promote returning blood glucose levels to normal. The aggregate effect is modelled using a proportional-integral strategy and is described by the equation
\begin{equation}\label{u}
u = A_1 e + A_2 \int_{\tau = -\infty}^t \lambda e^{-\lambda (t - \tau)} e(\tau) \mbox{d}\tau.
\end{equation}
The coefficients of proportional and integral response are $A_1$ and $A_2$ respectively, and $1/\lambda$ is the time scale of the delays in the feedback mechanism. Finally, the feedback term $\phi$ takes a different form for positive and
negative deviations $e$. \newline

\newpage
\noindent For positive deviations (hyperglycemia), the main feedback mechanism involves the excretion of insulin and the uptake of a fraction of the total glucose concentration per time unit. This mechanism is modelled by mass action kinetics, resulting in a quadratic term similar to that used by Bergman \cite{Bergman1979}. For negative deviations (hypoglycemia), the main feedback mechanism is that of the release of glucagon which, in turn, triggers the release of glucose from the liver. This process we model with a linear term as we consider the supply of glucose from the liver instantaneous and unlimited. The feedback $\phi$ is defined as
\begin{equation} \label{hypoHyper}
\phi(e, \bar{e}) = \max \{ e + \bar{e}, \bar{e} \}.
\end{equation}

\noindent Table \ref{tab:param} summarizes the model parameters with their meaning as well as units used in our work.

\begin{table}[h]
	\centering
%	\captionsetup{width=0.95\textwidth}
	\begin{tabular}{|llll|}\hline
		Parameter & Meaning & Range & Units \\ \hline \hline
		$A_1$ & Proportional control term & $0$ -- $0.03274$ & $\text{litre}/(\text{min} \times \text{mmol})$ \\\hline
		$A_2$ & Integral control term & $0$ -- $0.04627$ & $\text{litre}/(\text{min} \times \text{mmol})$ \\\hline
		$A_3$ & Basic metabolic rate & $0.0003$ & $\text{mmol}/(\text{min} \times \text{litre})$ \\\hline
		$\lambda$ & Inverse delay time scale & $0.02434$--$0.05804$ & $1/\text{min}$ \\\hline
		$\bar{e}$ & Set point glucose level & $4.0$--$5.9$ & $\text{mmol}/\text{litre}$ \\\hline
	\end{tabular} \vskip 3mm
	\caption{\small {\bf Parameters of the glucose homeostasis model.} Parameters of the control model and their expected value ranges across test subjects.}
	\label{tab:param}
\end{table}

\subsection*{Data acquisition and model fitting}
We validate our proposed model using glucose data collected from individuals who are considered healthy based on a variety of metrics such as body-mass index (BMI), oral glucose tolerance test (OGTT), and measure of glycated haemoglobin (HbA1c). The data is sourced from three groups: 1) Klick Pilot Study -- Employees of Klick Inc. who volunteered for the study ($N = 42$) \nameref{S1_Data}, 2) Klick Follow-up Studies -- Subjects were recruited across various sites in India ($N = 100$) \nameref{S2_Data}, and 3) Stanford study on glucose dysregulation ($N = 36$) \cite{hall2018glucotypes}. \newline 

\noindent Individuals selected for the study are considered healthy using the guidelines provided by the American Diabetes Association, where A1c levels are below $5.7\%$ and OGTT below 7.8 mmol/litre. In addition, the selected participants are not known to be diagnosed with any medical condition where medication may interfere with the subjects' blood glucose regulation. Glucose data of the both Klick studies were recorded using the Freestyle Libre Flash Glucose Monitoring System by Abbott Laboratories. For each individual, blood glucose levels were measured automatically every $15$ minutes over a two week duration. This provided each participant with their own time series of blood glucose readings. In the Stanford study group, glucose levels were measured using the Dexcom G4 CGM device once every $5$ minutes over a minimum of two weeks, up to a maximum of four weeks. All participants in the Klick pilot and follow-up studies gave informed consent to complete the survey, and both studies received full ethics clearance from Advarra IRB Service and from Ontario Tech University's ethic review board. \newline

\noindent To reduce the amount of noise that may be caused by  instrumentation error, each time series is smoothed with Gaussian smoothing. Then, episodes of hyperglycemia and hypoglycemia were identified by extracting sufficiently large positive and negative deviations within each time series, which we refer to as peaks and troughs respectively. Peaks and troughs are determined using the following criterion:

\begin{itemize}
	\item  First derivative changes from positive/negative to negative/positive at the peak's/trough's maximum/minimum.
	\item Second derivative changes from negative/positive to positive/negative at the endpoints of the peak/trough.
	\item The minimum/maximum of each peak/trough are chosen as the baseline glucose level $\bar{e}$.
\end{itemize}

\noindent Each peak and trough extracted were fitted against the proposed model by minimizing the function

\begin{equation}
E = \frac{\sum_{i=1}^n (\tilde{e}(t_i) - e(t_i))^2}{\sum_{i=1}^n \tilde{e}(t_i)^2} \label{E}
\end{equation}
where $\tilde{e}(t)$ is the Gaussian smoothed data from the initial step, and $n$ is the number of recorded points for a particular peak or trough. The base metabolic rate $A_3$ is assumed constant as all individuals in the study are considered healthy. The external input function $F(t)$ is modelled using a Gaussian function with a variable amplitude, center, and variance, as, during hyperglycemia, this agrees reasonably well with data measured in vitro \cite{Monro2010}. In hypoglycemic cases, $F(t)$ takes the same form but with a negative amplitude to model the source causing a drop in blood glucose levels. The integral term of the control variable was numerically approximated via the midpoint rule, and forward Euler method was used for time stepping. The fitting error $E$ is minimized with gradient descent. \newline

\section*{Results}
\subsection*{Dynamical systems analysis of control model}
\noindent In the following analysis, we will use an equivalent planar dynamical system obtained by eliminating the integral term. If we define $f(u,e)$ to be the right-hand side of Eq \ref{edot}, then Eq \ref{edot}--\ref{u} can be described by the equivalent system of differential equations
\begin{align}
\frac{\mbox{d}u}{\mbox{d}t} &= -\lambda u + A_1 f(u,e) + \lambda (A_1+A_2) e \label{udot}\\
\frac{\mbox{d}e}{\mbox{d}t} &= f(u,e) \label{edot2}
\end{align}
under the assumption that the initial condition satisfies $u_0-A_1 e_0=0$ and letting $\lambda t_0 \downarrow -\infty$. We will consider this dynamical system on the domain $D=\{(u,e) ~|~ e>-\bar{e}\}$. If $e(t)$ approaches $-\bar{e}$, the total blood glucose concentration approaches zero and our model is no longer valid since it does not
describe reaction of the human body to life-threatening hypoglycemia.

\subsubsection*{Stability and bifurcation structure}
The equilibria of the proposed model satisfy
\begin{equation}\label{equilibrium_eq}
-A_3 + F - (A_1+A_2) e^* \phi(e^*, \bar{e}) = 0.
\end{equation}
We first assume that the input is constant and define $G =-A_3 + F$. The sign of $G$ determines if the input is greater than the resting metabolic rate. Suppose that $e^* \geq 0$, then $\phi = e^* + \bar{e}$, and thus
\begin{equation}\label{equilibrium_e_pos}
e^* = \frac{-(A_1 + A_2) \bar{e} + \sqrt{(A_1 + A_2)^2 \bar{e}^2 + 4(A_1 + A_2) G}}{2(A_1 + A_2)}.
\end{equation}
is a valid solution provided that $G\geq 0$. Suppose now that $e^* < 0$, then $\phi = \bar{e}$. Then a stationary point is
\begin{equation}\label{equilibrium_e_neg}
e^* = \frac{G}{(A_1 + A_2) \bar{e}} < 0
\end{equation}
thus requiring that $G < 0$. Furthermore, we demonstrate the following result:

\begin{thm} \label{thm:stability}
	The stationary points (Eq \ref{equilibrium_e_pos}--\ref{equilibrium_e_neg}) of the dynamical system described by Eq \ref{udot} and Eq \ref{edot2} are asymptotically stable for any constant input function
\end{thm}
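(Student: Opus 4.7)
The plan is to prove asymptotic stability of each equilibrium by linearizing the planar system at the equilibrium and invoking the Routh--Hurwitz criterion: for a $2\times 2$ Jacobian, negative trace together with positive determinant implies that both eigenvalues have negative real part. Because the nonlinearity $\phi(e,\bar{e})=\max\{e+\bar{e},\bar{e}\}$ is piecewise linear with a corner at $e=0$, I would split the argument along the sign of $e^*$ (equivalently, the sign of $G=F-A_3$) and work on the corresponding open half-plane where $f(u,e)=-A_3-u\phi(e,\bar{e})+F$ is smooth, so that the standard Hartman--Grobman-type conclusion applies directly.

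On the hyperglycemic branch $e^*\geq 0$, I have $\phi=e+\bar{e}$, hence $\partial_u f=-(e+\bar{e})$ and $\partial_e f=-u$. Differentiating Eq \ref{udot}--\ref{edot2} at the equilibrium and using $u^*(e^*+\bar{e})=G$ to simplify produces a Jacobian with trace $-\lambda-A_1(e^*+\bar{e})-u^*$ and, after the $A_1u^*(e^*+\bar{e})$ contributions cancel, determinant $\lambda\bigl[u^*+(A_1+A_2)(e^*+\bar{e})\bigr]$. Since $G\geq 0$ on this branch forces $u^*\geq 0$, and all parameters and $\bar{e}$ are positive, both quantities have the required sign. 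On the hypoglycemic branch $e^*<0$, $\phi=\bar{e}$ is independent of $e$, which zeroes out two entries of the Jacobian and yields trace $-\lambda-A_1\bar{e}$ and determinant $\lambda(A_1+A_2)\bar{e}$, again with the correct signs regardless of the admissible value of $G<0$.

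I would close the argument by reconciling the two branches at the boundary case $G=0$, noting that $u^*\to 0$ and $e^*\to 0$ from either side and that the two Jacobians computed on each half-plane agree continuously at this common limit, so the stability conclusion is uniform across the entire admissible range of constant inputs.

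The main obstacle I anticipate is not the $2\times 2$ linear algebra but the non-smooth structure at $\{e=0\}$: one must be explicit that the Routh--Hurwitz conclusion is applied to a vector field which is only piecewise smooth. The cleanest resolution is to observe that for $G\neq 0$ the equilibrium lies strictly inside a half-plane on which the vector field is $C^\infty$, so the classical linear-stability theorem applies verbatim, while the degenerate case $G=0$ is handled by the continuity of the Jacobian across the kink. A secondary bookkeeping concern is verifying that both equilibrium formulae \ref{equilibrium_e_pos}--\ref{equilibrium_e_neg} remain inside the domain $D=\{e>-\bar{e}\}$ for the range of $G$ under consideration, so that the stability claim is physiologically meaningful.
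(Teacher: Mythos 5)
Your proposal follows essentially the same route as the paper's own proof: linearize the planar system at the equilibrium, compute the trace and determinant of the Jacobian on each branch of $\phi$ (splitting on the sign of $G$), and conclude via negative trace and positive determinant; your trace and determinant expressions match the paper's exactly. The only difference is that you treat the non-smooth corner at $e=0$ and the boundary case $G=0$ explicitly, a point the paper's proof passes over silently, so your write-up is if anything slightly more careful than the original.
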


\begin{proof}
	See \nameref{S1_Thm}.
\end{proof}

\subsubsection*{Boundedness of solutions}
Consider the following function $L\in C^1(D)$:
\begin{equation}\label{Lyapunov_function}
L(u,e)=\begin{cases} \frac{(u-A_1 e)^2}{2\lambda A_2} + \frac{e^2}{2\bar{e}} +e \quad &\text{if}\ e\leq0 \\\frac{(u-A_1 e)^2}{2\lambda A_2} + e \quad &\text{if}\ e>0 
\end{cases}
\end{equation}
This function increases monotonically away from its minimum at $(u,e)=(-A_1\bar{e},-\bar{e})$ and plays the role of a Lyapunov function. For its derivative along a solution,
we find
\begin{equation}\label{dLdt}
\frac{\mbox{d}L}{\mbox{d}t}=\begin{cases}
-\frac{1}{A_2}\left(u-A_1 e+\frac{A_2 \bar{e}}{2}\right)^2 - A_1 \left(e+\frac{1}{2A_1}\left[A_1\bar{e}-\frac{G}{\bar{e}}\right]\right)^2 +\frac{A_2\bar{e}^2}{4} &\\
\hfill +\frac{1}{4A_1}\left(A_1\bar{e}-\frac{G}{\bar{e}}\right)^2 + G\quad & \text{if}\ e\leq 0\\
-\frac{1}{A_2}\left(u-A_1 e+\frac{A_2 \bar{e}}{2}\right)^2 - A_1 \left(e+\frac{\bar{e}}{2}\right)^2+\frac{\bar{e}^2}{4}(A_1+A_2)+G\quad & \text{if}\ e> 0
\end{cases}
\end{equation}
The zerocline of $\mbox{d}L/\mbox{d}t$ is an ellipsoid centered at $(u,e)=(-[A_1+A_2]\bar{e}/2,\,-\bar{e}/2)$. The interior of any isocline of $L$ that encloses this ellipsoid, or rather its
intersection with the phase space $D$, is
a trapping region for model (\ref{udot}--\ref{edot2}). Searching for the smallest possible trapping region leads to a fourth-order polynomial equation. A simpler
bound can be found by enclosing the ellipsoid with a parallelogram and fitting the isocline of $L$ to the latter. This leads to the following result.
\begin{thm}\label{thm:trapping}
	Solutions to the system of equations (\ref{udot}--\ref{edot2}) with constant net input $G=-A_3+F$ eventually enter the interior of the curve $L=C$ and remain there. The minimal value of $C$ is bounded from above by
	\begin{align}\label{trapping1}
	C_- &= \frac{1}{8A_1 \lambda \bar{e}^2}\Big(\left[2 \bar{e}^2 \sqrt{A_1 A_2} + 4 \bar{e} \lambda\right] \sqrt{A_1 A_2\bar{e}^4+[A_1\bar{e}^2+G]^2}\nonumber \\ &+ A_1 (A_1 + 2 A_2)\bar{e}^4 - 4 A_1 \bar{e}^3\lambda + 2 A_1 G \bar{e}^2 + 4 G \bar{e} \lambda + G^2\Big)
	\end{align}
	if $G\leq -A_2 \bar{e}^2/4$ and by
	\begin{align}\label{trapping2}
	C_+ &= \frac{1}{8A_1 \lambda \bar{e}^2}\Big(2\sqrt{A_1 A_2}\bar{e}^2\sqrt{A_1 A_2 \bar{e}^4 +[A_1\bar{e}^2+G]^2}+(A_1 \bar{e}^2+G)^2\\\nonumber
	& +4\lambda\bar{e}^2\sqrt{A_1 [A_1 + A_2]\bar{e}^2 + 4 A_1 G}+2 A_1 A_2 \bar{e}^4-4 A_1 \bar{e}^3 \lambda\Big)
	\end{align}
	if $G>-A_2 \bar{e}^2/4$. An illustration of such trapping region is shown in Fig \ref{fig:trapping}.
	\begin{figure*}[h]
		\centering
		\includegraphics[width=0.8\textwidth]{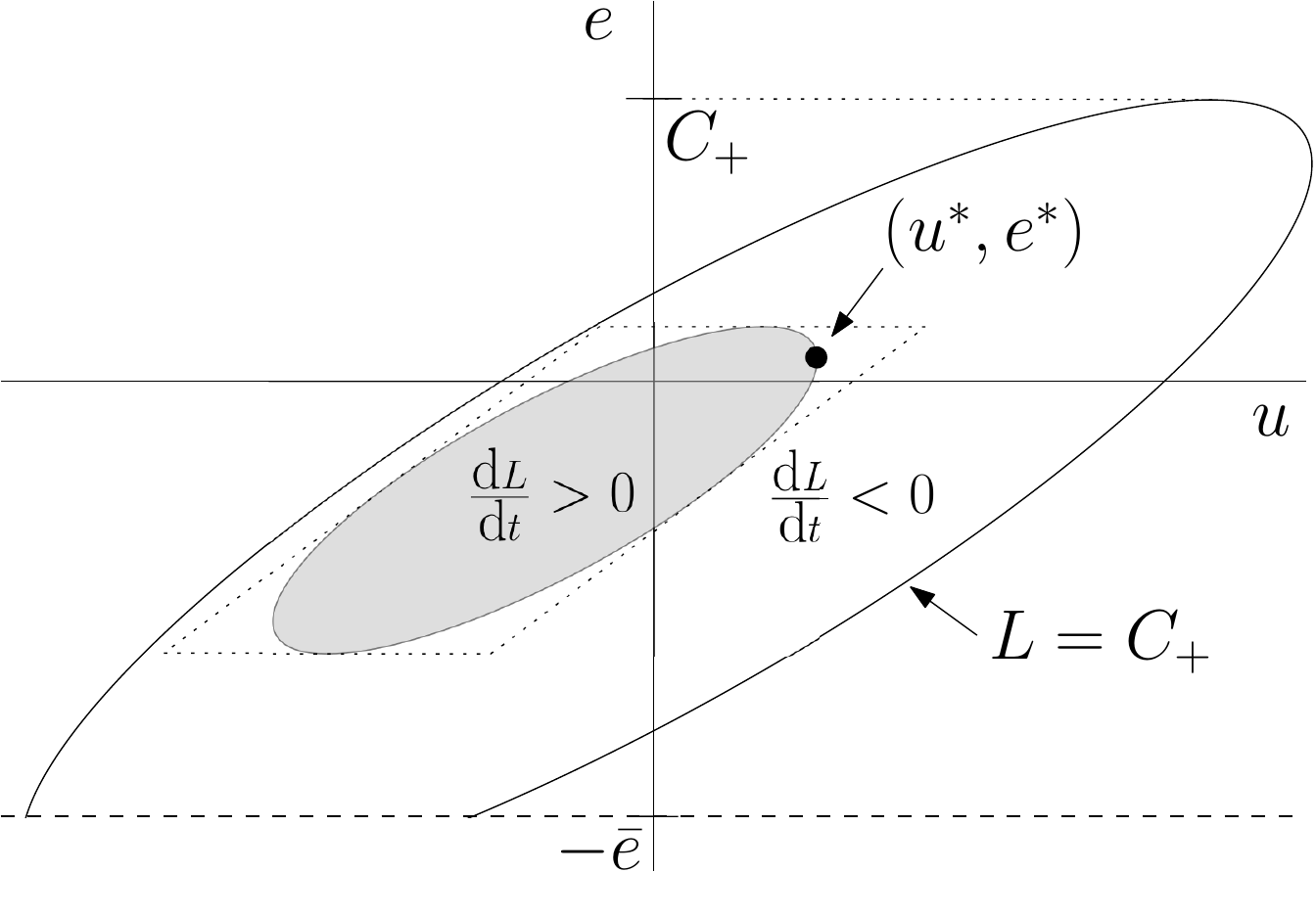}
		\caption{\small{\bf Schematic illustration of the trapping region, shown in the case $G>0$.} The inner ellipse is defined by $\mbox{d}L/\mbox{d}t=0$ and the outer one by $L=C_+$. Both are sheared by the transformation $(v,e)\rightarrow (u,e)=(v+A_1 e,e)$. The rectangle $v_{\rm min}\leq v \leq v_{\rm max}$, $e_{\min}\leq e\leq e_{\rm max}$, used to obtain an explicit upper bound for $C$, is shown with dashed lines. The stable equilibrium is shown in the first quadrant.}
		\label{fig:trapping}
	\end{figure*}
\end{thm}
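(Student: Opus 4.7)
The plan is to exploit the Lyapunov character of $L$: choose $C$ large enough so that the sublevel set $\{L\le C\}$ contains the zerocline $\{\mathrm{d}L/\mathrm{d}t\ge 0\}$, in which case $\mathrm{d}L/\mathrm{d}t<0$ on the complement and standard forward-invariance arguments force trajectories to flow into $\{L\le C\}$ and remain. Finding the sharpest such $C$ leads to the fourth-order envelope condition already mentioned in the paper, so we instead obtain an explicit upper bound by circumscribing the zerocline with a rectangle (a parallelogram in the original coordinates) and fitting a level curve of $L$ to it.

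First I would pass to the sheared coordinates $(v,e)=(u-A_1 e,\,e)$, in which $L$ becomes $v^2/(2\lambda A_2)+e^2/(2\bar e)+e$ on $e\le 0$ and $v^2/(2\lambda A_2)+e$ on $e>0$, while both branches of $\mathrm{d}L/\mathrm{d}t$ in Eq.~(\ref{dLdt}) become concave quadratics in $(v,e)$. Two structural facts organize the rest of the argument: $L$ is piecewise convex with matched first derivatives at $e=0$, hence convex on $D$ with convex sublevel sets, and $L$ depends on $v$ only through $v^2$. The zerocline is then the union of two elliptical arcs, one from each branch, joined continuously at $e=0$ and sharing the common $v$-centre $v_c=-A_2\bar e/2$. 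A direct substitution shows the joint curve intersects the line $e=0$ exactly when $G>-A_2\bar e^2/4$, and otherwise lies entirely in $\{e\le 0\}$; this dichotomy is the origin of the case split in the theorem.

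Next I would circumscribe the zerocline by the axis-aligned rectangle $R=[v_{\min},v_{\max}]\times[e_{\min},e_{\max}]$ in $(v,e)$, using the identity
\[
A_1(A_1+A_2)\bar e^4+2A_1 G\bar e^2+G^2 \;=\; A_1 A_2\bar e^4+(A_1\bar e^2+G)^2
\]
to write the widest $v$-extent (which always comes from the $e\le 0$ arc and is attained at $e=e_c$) as $\pm(\sqrt{A_2/A_1}/(2\bar e))\sqrt{A_1 A_2\bar e^4+(A_1\bar e^2+G)^2}$ about $v_c$; this explains the recurrent square root in both $C_-$ and $C_+$. The $e$-extent brings in the additional factor $\sqrt{A_1(A_1+A_2)\bar e^2+4A_1 G}$ in the regime $G>-A_2\bar e^2/4$, where $e_{\max}>0$ comes from the $e>0$ arc. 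Since $L$ is convex and $L(v,e)=L(-v,e)$ with $v_c<0$, the vertex of $R$ maximising $L$ is $(v_{\min},e_{\max})$; substituting it into the appropriate branch formula for $L$ --- the $e>0$ formula when $e_{\max}>0$, yielding $C_+$, and the (linear-in-$e$) extension of that formula when $e_{\max}\le 0$, yielding $C_-$ --- and simplifying using the identity above produces the displayed bounds.

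A standard Lyapunov argument then closes the proof: since $\{L\le C_\pm\}$ contains the zerocline, $\mathrm{d}L/\mathrm{d}t<0$ strictly on $\{L>C_\pm\}\cap D$, and $L$ is continuous, bounded below on $D$, and proper in $v$, so any trajectory outside the sublevel set has $L(t)$ strictly decreasing and must enter $\{L\le C_\pm\}$ in finite time, after which $\mathrm{d}L/\mathrm{d}t\le 0$ on the boundary prevents escape. The main obstacle, I expect, is the bookkeeping of the rectangle step: verifying case by case that $R$ really does enclose the full zerocline (particularly the arc whose ellipse centre lies outside the validity of its own branch of $L$), identifying which vertex maximises $L$, and carrying out the routine but lengthy algebra that turns the vertex value into the closed forms. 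A delicate subpoint in the $G\le -A_2\bar e^2/4$ case is that the formula $C_-$ effectively fits the extension of the $e>0$ branch of $L$ through the vertex rather than the true ($e\le 0$) branch; one must check separately --- via a monotonicity comparison between the two branches across $e=0$ --- that the resulting sublevel set of the true piecewise $L$ still contains the zerocline.
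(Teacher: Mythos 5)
You have reconstructed the paper's own proof: the paper likewise passes to $v=u-A_1e$, notes that the zerocline of $\mbox{d}L/\mbox{d}t$ has its axes aligned with the $(v,e)$ coordinates, encloses it in the rectangle $[v_{\rm min},v_{\rm max}]\times[e_{\rm min},e_{\rm max}]$, uses $|v_{\rm min}|>|v_{\rm max}|$ (your evenness-plus-$v_c<0$ argument; the paper states $v_{\rm min}<-v_{\rm max}<0<v_{\rm max}$) together with monotonicity of $L$ in $e$ on $D$ to place the maximum at the vertex $(v_{\rm min},e_{\rm max})$, and evaluates there; your corner coordinates and the identity $A_1(A_1+A_2)\bar{e}^4+2A_1G\bar{e}^2+G^2=A_1A_2\bar{e}^4+(A_1\bar{e}^2+G)^2$ match the supplementary proof exactly. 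Where you part ways with the paper is precisely your final ``delicate subpoint,'' and your reading is correct: expanding Eq (\ref{trapping1}) shows that $C_-$ equals $\frac{v_{\rm min}^2}{2\lambda A_2}+e^-_{\rm max}$, i.e.\ the value of the linear-in-$e$ extension of the $e>0$ branch, which falls short of the true piecewise $L$ at that vertex by $(e^-_{\rm max})^2/(2\bar{e})$. The paper does not perform the extra check you call for; it simply asserts $L(v_{\rm min}+A_1e^-_{\rm max},e^-_{\rm max})=C_-$, which is false whenever $e^-_{\rm max}<0$.

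The genuine gap is that your deferred check cannot be closed unconditionally, because the required containment actually fails for some parameter values. Since the extension underestimates $L$ on $e<0$, containment of the zerocline in $\{L\le C_-\}$ for the true piecewise $L$ is strictly stronger than what the rectangle argument yields. At the top of the $e\le 0$ ellipse, the point $(v,e)=(v_c,e^-_{\rm max})$ with $v_c=-A_2\bar{e}/2$ and $v$-semi-axis $a=\sqrt{A_2K_-}$ (where $K_-$ is the constant on the right-hand side of the ellipse equation), one finds
\begin{equation*}
L(v_c,e^-_{\rm max})-C_-=\frac{(e^-_{\rm max})^2}{2\bar{e}}-\frac{a(a-2v_c)}{2\lambda A_2},
\end{equation*}
and since $a$, $v_c$, $e^-_{\rm max}$ do not depend on $\lambda$, this is positive once $\lambda$ is large enough (e.g.\ $A_1=0.016$, $A_2=0.023$, $\bar{e}=5$, $G=-0.3$, $\lambda=10$). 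For such parameters part of the region $\{\mbox{d}L/\mbox{d}t>0\}$ lies outside $\{L\le C_-\}$, the level curve $L=C_-$ passes through points where $\mbox{d}L/\mbox{d}t>0$, so $\{L\le C_-\}$ is not forward invariant and the minimal trapping constant exceeds $C_-$; Eq (\ref{trapping1}) is therefore not a valid bound there (it does hold throughout the physiological ranges of Table \ref{tab:param}, where $\lambda$ is small). The repair implicit in your own write-up is the correct one: take the bound in the case $G\leq -A_2\bar{e}^2/4$ to be the true vertex value $C_-+(e^-_{\rm max})^2/(2\bar{e})$, for which your rectangle-plus-Lyapunov closing argument goes through verbatim. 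In short: you followed the same route as the paper, but the subpoint you flagged is not routine bookkeeping --- it is an error in the paper's stated $C_-$, and your version evaluated on the true branch is the form in which the theorem is actually provable.
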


\newpage
\begin{proof}
	See \nameref{S2_Thm}.
\end{proof}

\noindent Using the above result, we can also establish the boundedness of solutions with variable forcing $G(t)$. Let us assume that $G$ is bounded, i.e. $G_{\rm min}\leq G(t)\leq G_{\rm max}$ for $t\in[0,\infty)$. We then find the trapping region of Theorem \ref{thm:trapping}, but with the constant $C_{\pm}$ replaced by their supremum for $G\in [G_{\rm min},G_{\rm max}]$. Most importantly, if $G_{\rm max}>0$ we have the following.
\begin{cor}\label{cor1}
	Let $G(t)=-A_3+F(t)$ be bounded and let $G_{\rm max}>0$. Then solutions to to the system of equations (\ref{udot}--\ref{edot2}) eventually enter the interior of the curve $L=C$ and remain there. The constant $C$ is given by
	\begin{align}%\label{trapping2}
	C &= \max \{ C_-(G_{\min}), C_+(G_{\max})\}
	\end{align}
\end{cor}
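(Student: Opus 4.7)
The plan is to rerun the Lyapunov argument of Theorem \ref{thm:trapping} but with $G(t)$ handled pointwise. The function $L$ of Eq \ref{Lyapunov_function} depends only on the state $(u,e)$, so along a solution $\mbox{d}L/\mbox{d}t$ at time $t$ is still given by Eq \ref{dLdt} with $G$ replaced by the instantaneous input $G(t)$. The observation that unlocks the corollary is that, for each fixed value $G\in[G_{\min},G_{\max}]$, Theorem \ref{thm:trapping} already provides a constant $C_\pm(G)$ with $\mbox{d}L/\mbox{d}t<0$ on $\{L>C_\pm(G)\}$, so a trapping level that works for every admissible input simultaneously can be obtained by taking the supremum of these constants.

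First I would carry out this union-of-trapping-regions argument formally. Set $C=\sup_{G\in[G_{\min},G_{\max}]}C_\pm(G)$ and suppose $L(u(t),e(t))>C$ at some time $t$. Then $L(u(t),e(t))>C_\pm(G(t))$ as well, so the pointwise inequality $\mbox{d}L/\mbox{d}t<0$ at the current state follows from Theorem \ref{thm:trapping} applied to the frozen input $G(t)$. Because $L$ is proper on $D$, the trajectory must enter $\{L\leq C\}$ in finite time; the same pointwise inequality on $L=C$ shows that set is forward invariant.

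What remains is to identify this supremum with the stated $\max\{C_-(G_{\min}),C_+(G_{\max})\}$. I would verify that $C_-(G)$ is nonincreasing on $(-\infty,-A_2\bar{e}^2/4]$ and $C_+(G)$ is nondecreasing on $[-A_2\bar{e}^2/4,\infty)$, either by differentiating the closed forms (Eq \ref{trapping1}--\ref{trapping2}) term-by-term---the radicals $\sqrt{A_1 A_2\bar{e}^4+(A_1\bar{e}^2+G)^2}$ and $\sqrt{A_1(A_1+A_2)\bar{e}^2+4A_1 G}$ are monotone in $G$ on each relevant half-line once the sign of $A_1\bar{e}^2+G$ is pinned down---or, more cleanly, by revisiting the geometric construction in the proof of Theorem \ref{thm:trapping}: $C_\pm$ is defined there as the smallest level of $L$ enclosing a parallelogram that circumscribes the zerocline ellipse of $\mbox{d}L/\mbox{d}t$, and the half-widths of that parallelogram inherit monotonicity in $G$ from the $G$-dependence of the ellipse's centre and semi-axes.

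I expect this termwise monotonicity check to be the main obstacle: the closed-form expressions are unwieldy and tend to bury the underlying geometry. Working from the circumscribed-parallelogram definition instead makes the sign of $\mbox{d}C_\pm/\mbox{d}G$ transparent on each branch and delivers the corollary with no further computation.
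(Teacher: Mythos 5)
Your first step --- freezing $G(t)$ pointwise, applying Theorem \ref{thm:trapping} to the frozen input, and taking the supremum of the trapping levels over $G\in[G_{\min},G_{\max}]$ --- is sound and matches the paper (which asserts this reduction in the text immediately preceding the corollary; you actually spell it out more carefully). The gap is in the step you yourself flagged as the main obstacle: the claim that $C_-$ is nonincreasing on $(-\infty,-A_2\bar{e}^2/4]$ is false, and the sign of $A_1\bar{e}^2+G$ cannot be ``pinned down'' on that branch, because it changes at $G=-A_1\bar{e}^2$, which lies \emph{inside} the branch whenever $A_1>A_2/4$ (true for essentially all fitted parameter values in the paper, e.g.\ the hyperglycemic means $A_1\approx 0.0073$, $A_2\approx 0.0033$). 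Writing $s=A_1\bar{e}^2+G$ and differentiating Eq \ref{trapping1},
\begin{equation*}
\frac{\mbox{d}C_-}{\mbox{d}G}=\frac{1}{8A_1\lambda\bar{e}^2}\left(\left[2\bar{e}^2\sqrt{A_1A_2}+4\bar{e}\lambda\right]\frac{s}{\sqrt{A_1A_2\bar{e}^4+s^2}}+4\bar{e}\lambda+2s\right),
\end{equation*}
which equals $1/(2A_1\bar{e})>0$ at $s=0$ yet tends to $-\infty$ as $G\to-\infty$. Hence $C_-$ decreases and then increases along its branch: it has an interior minimum and is not monotone, so your verification would fail. The geometric variant fails for the same reason: the squared semi-axes of the zerocline ellipse are proportional to $A_1A_2\bar{e}^4+(A_1\bar{e}^2+G)^2$, which is minimized at the interior point $G=-A_1\bar{e}^2$, so the circumscribed parallelogram does not vary monotonically with $G$.

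The paper closes this step differently: it computes $\mbox{d}^2C_-/\mbox{d}G^2>0$ for all $G$, so that the maximum of $C_-$ over $[G_{\min},-A_2\bar{e}^2/4]$ is attained at an endpoint; the right endpoint is then excluded because $C$ is continuous at the junction ($C_-(-A_2\bar{e}^2/4)=C_+(-A_2\bar{e}^2/4)$), $C_+$ is strictly increasing, and $G_{\max}>0>-A_2\bar{e}^2/4$, whence $C_-(-A_2\bar{e}^2/4)<C_+(G_{\max})$. Note that your argument never invokes the hypothesis $G_{\max}>0$ --- a warning sign, since that hypothesis is exactly what is needed to dominate the junction value of the convex branch. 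The corollary's formula is nevertheless correct, and your proof is repaired by replacing the monotonicity claim for $C_-$ with this convexity-plus-endpoints argument (your monotonicity claim for $C_+$ agrees with what the paper asserts and can be kept).
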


\begin{proof}
	See \nameref{S1_Cor}.
\end{proof}

\noindent If we fix the parameters to the middle of the range given in Table \ref{tab:param} then the largest value the glucose concentration attains inside the trapping region varies from $9\, (\text{mmol/litre})$, for $G=-0.15\,(\text{mmol}/\text{litre min})$, to $13.5\, (\text{mmol/litre})$ for $G=0.15\,(\text{mmol}/\text{litre min})$. This range of $G$ is in line with experiments \cite{Monro2010} as well as with our data. In our datasets, we discovered that the maximum value of $G$ for all individuals is $0.1098$ (mmol/litre min) and the minimum value of \textit{G} is $-0.0964$ (mmol/litre min). The methodology of the experiments will be discussed in further detail below. A blood glucose level of over $11.1\,(\text{mmol}/\text{litre})$ is generally considered an indication that the glucose homeostasis is dysfunctional. \newline

\subsection*{Model fitting with CGM data}
\noindent Upon extracting the peaks and troughs of individual CGM data, the peaks and troughs of individuals were fitted to the model with a fitting error of $0.3028 \pm 0.6297$ ($E_{\max} = 3.7545$) and $0.1159 \pm 0.1780$ ($E_{\max} = 1.2578$) respectively, as defined in Eq \ref{E}. In a small number of cases ($<0.5\%$ of all selected peaks), fitting error was comparatively high due to the shape of the selected peaks. In these instances, two or more smaller peaks were combined together into one automatically selected peak, resulting in a glucose deviation that was multi-modal. Two of each representative peaks and troughs are shown in Fig \ref{fig:examplepeakshyper} and Fig \ref{fig:examplepeakshypo} with similar error to the mean. This demonstrates that our model is able to reproduce glucose levels of healthy individuals with good agreement to real world measurements.

\begin{figure*}[h]
	\centering
	\includegraphics[width=0.495\textwidth]{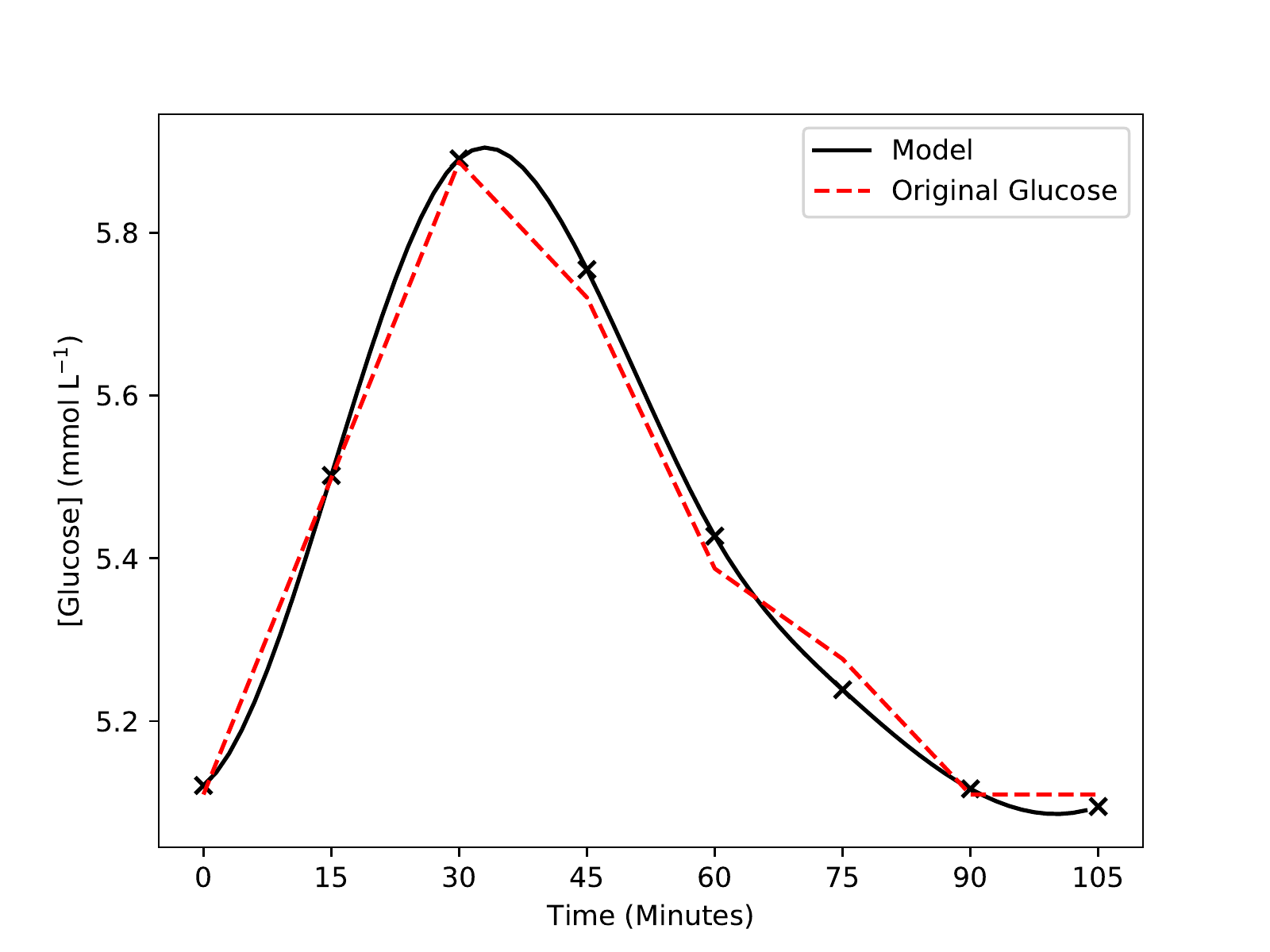}
	\hfill
	\includegraphics[width=0.495\textwidth]{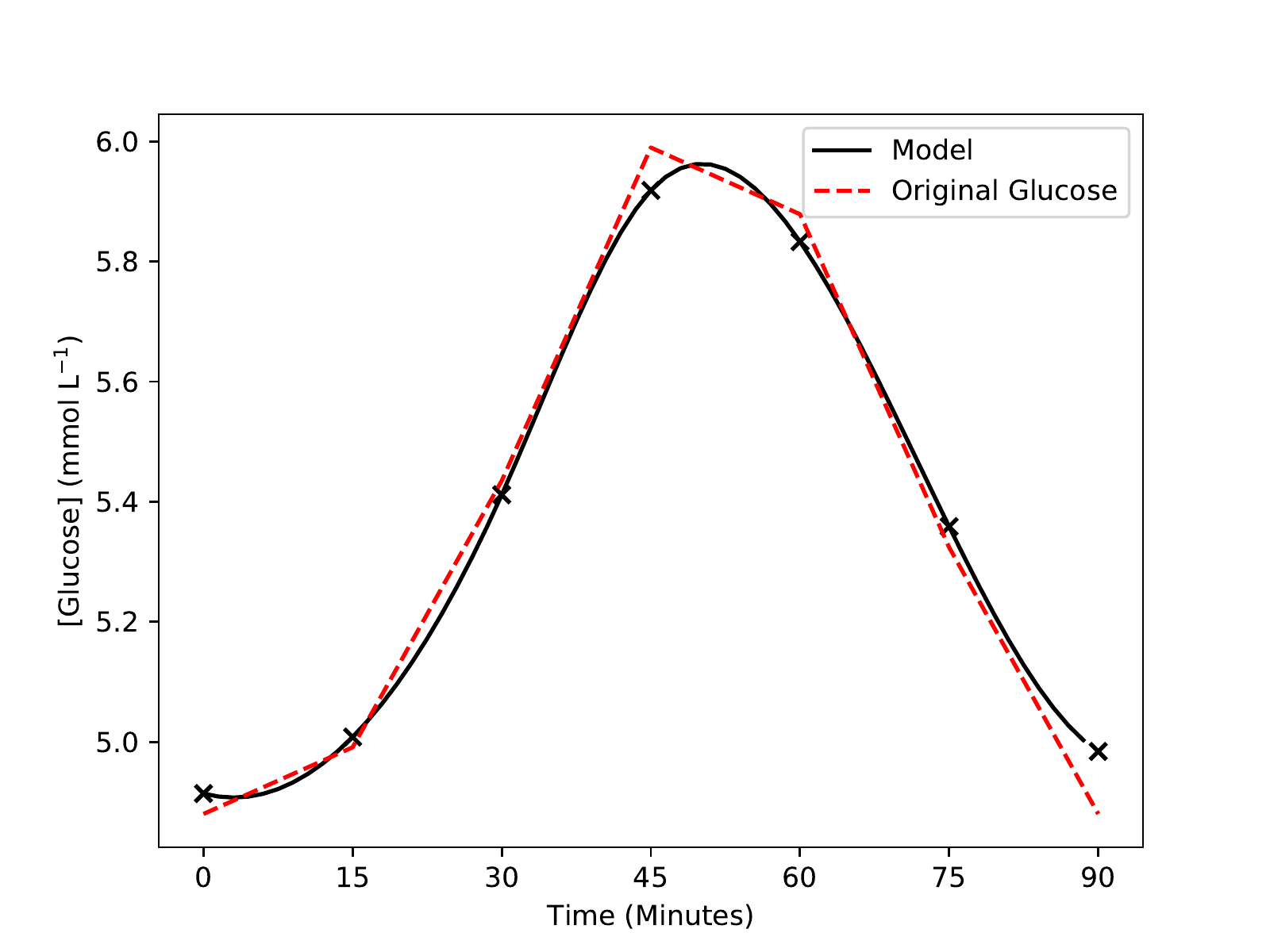}
	\caption
	{\small {\bf Example results of the hyperglycemic model.} The original glucose data is represented by the red dashed lines. The set of black crosses is the model glucose output, and the black curve is the cubic spline interpolation of the model outputs.} 
	\label{fig:examplepeakshyper}
\end{figure*}

\begin{figure*}[h]
	\centering
	\includegraphics[width=0.495\textwidth]{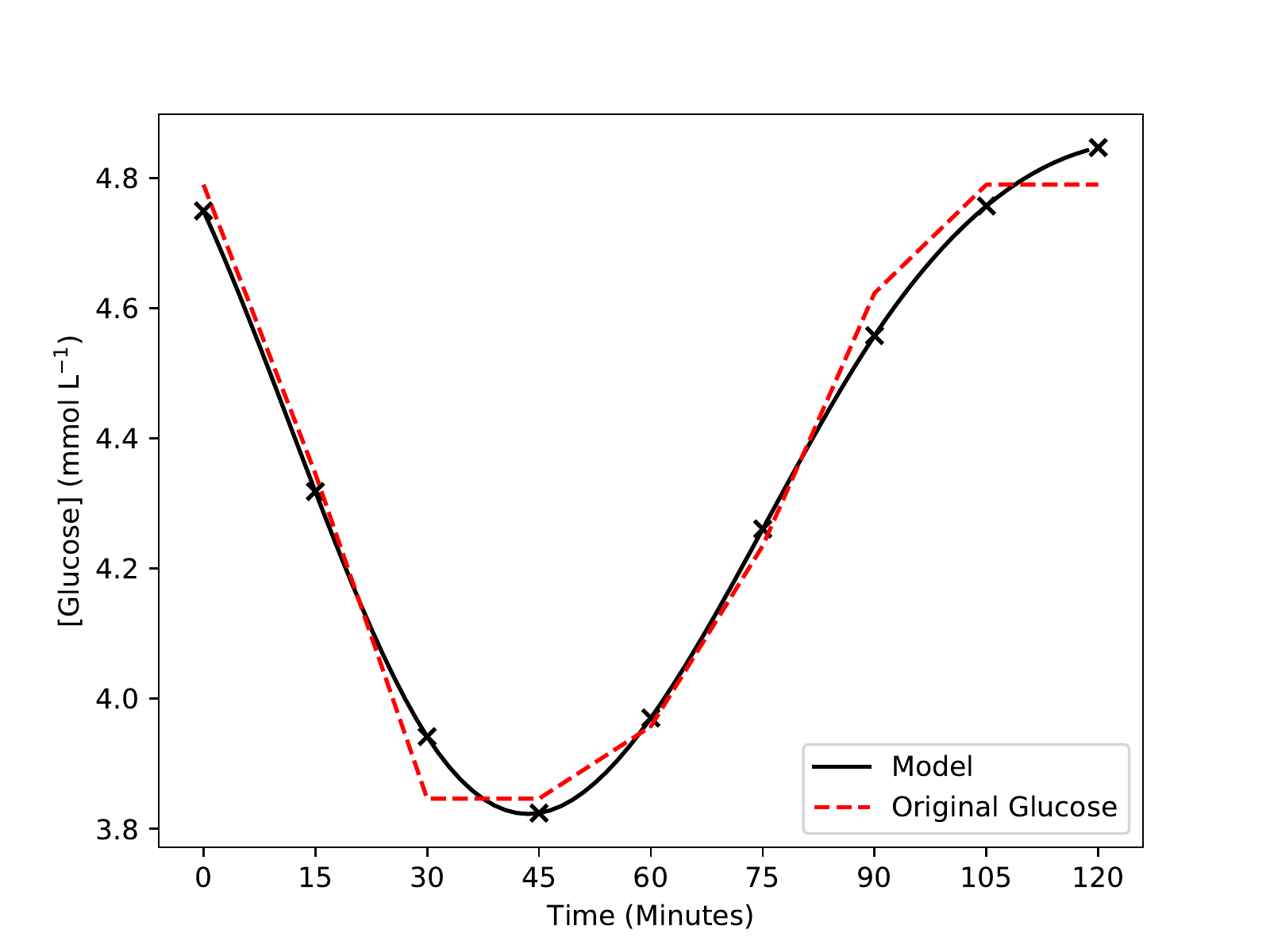}
	\hfill
	\includegraphics[width=0.495\textwidth]{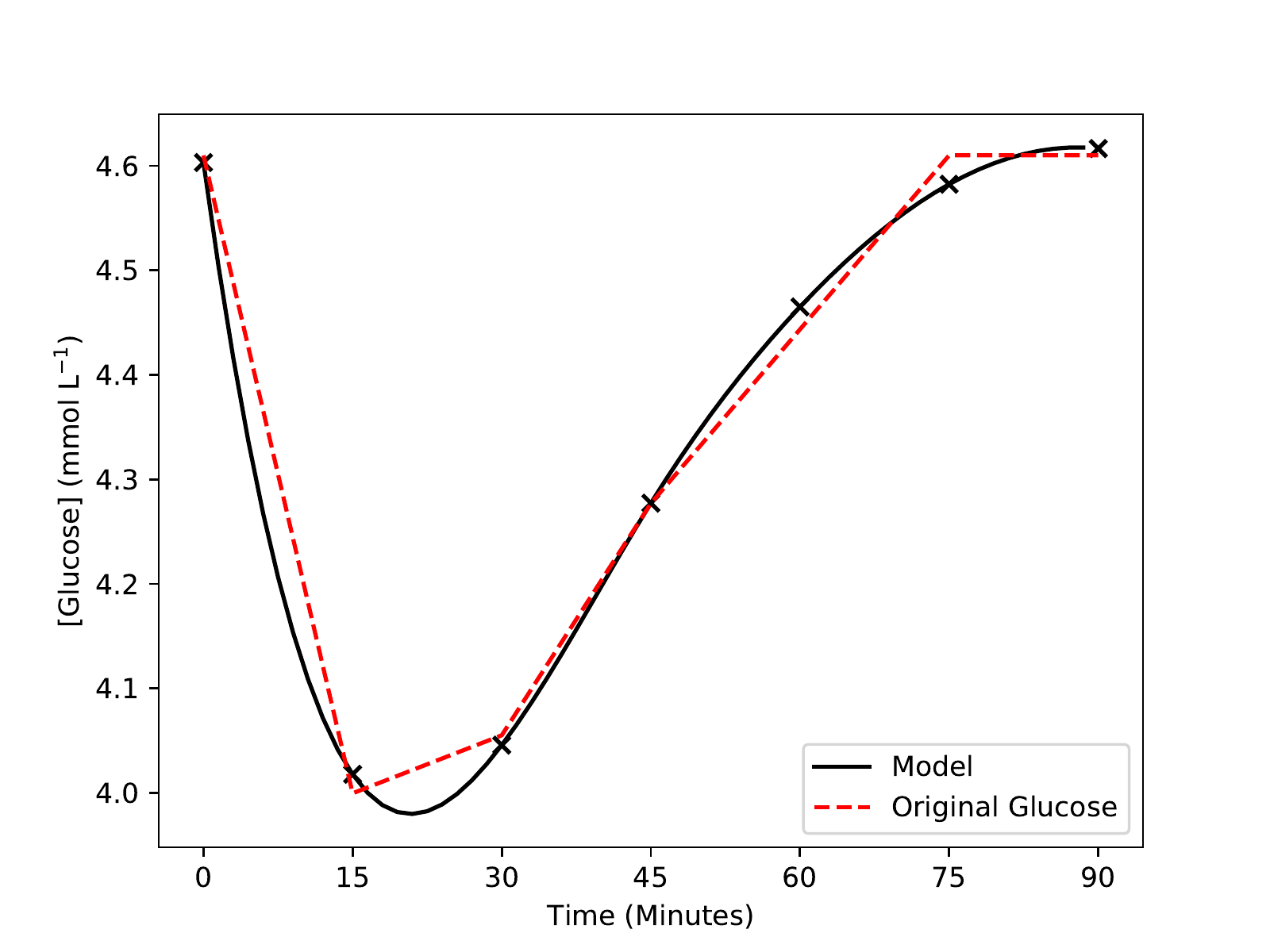}
	\caption
	{\small {\bf Example results of the hypoglycemic model.} The original glucose data is represented by the red dashed lines. The set of black crosses is the model glucose output, and the black curve is the cubic spline interpolation of the model outputs.} 
	\label{fig:examplepeakshypo}
\end{figure*}

\newpage
\noindent \nameref{S1_Table} outlines the range of parameter values across each of the three datasets. In each study, the mean of each parameter value fall within $15\%$ of the overall mean. In addition, the largest coefficient of variation is $0.3402$ (Hyperglycemic, $A_1$, Klick Follow-up), indicating low variances across all model parameters. This suggests that the distributions of $A_1, A_2$, and $\lambda$ are independent from the source of data. From this it can be concluded that the model results are consistent regardless of the presence of intangibles such as cultural and demographic differences. Due to our peak/trough selection method and length differences in CGM data, the number of peaks/troughs extracted vary between different individuals. The means of $A_1$ and $A_2$ for each individual were computed (via bootstrapping) to compensate for this inconsistency. We observed that the means of $A_1$ and $A_2$ result in a clustering behaviour, demonstrated graphically in \nameref{S1_Fig}. \newline

\noindent The parameters $A_1, A_2$, and $\lambda$ for all selected peaks for each individual were bootstrapped, then normalized as standard-normal variables in order to derive an estimation of the distribution of parameter values. Their respective distributions were tested for normality using the Shapiro-Wilk test with $0.05$ as the critical $p$-value. The distributions of parameter values and Gaussian overlays are shown in \nameref{S2_Fig} and \nameref{S3_Fig} respectively. The data suggests that all parameters follow a normal distribution except hypoglycemic $\lambda$. Upon further investigation, we discovered that $\lambda$ falls under a log-normal distribution. This was confirmed by the performing the Shapiro-Wilk test on $\log \lambda$, resulting in a $p$-value of $0.419$. The results of the Shapiro-Wilk test are included in \nameref{S1_Table}. \newline

\noindent The experiments were performed on a HP EliteBook x360 1030 G4 laptop, with an Intel Core i7-8665U at 1.9GHz, and 16.0 GB of system memory. On average, $55.7$ peaks/troughs were extracted from individuals' CGM data over the course of a week (with standard deviation of $14.4$). This leads to our experiments requiring five to ten minutes to analyze the glucose data of a single subject, depending on the length of the time series. Note that, however, the vast majority ($90\%+$) of single peaks/troughs require less than one second to compute their corresponding parameter values. We can, therefore, obtain results in real time whenever peaks/troughs are detected by wearable glucose monitoring sensors. The remaining peaks/troughs required five to ten seconds for convergence. The computation time can be improved by choosing initial parameter values near their expected convergence values. This will reduce the number of iterations needed for gradient descent to converge, effectively lowering the total runtime required. \newline

\noindent Our data supports that the parameters of our model are normally distributed, with the exception $\lambda$ for hypoglycemia, which falls under a log-normal distribution. This suggests that the model parameters remain fairly structured among healthy individuals regardless of the source of the data. We observed that all three parameters were noticeably different in value between hyperglycemic and hypoglycemic cases. In particular, parameter values for hyperglycemia are lower than that of hypoglycemia. This was expected because the control systems modelling hyperglycemic/hypoglycemic cases are fundamentally different, mathematically speaking. At sufficiently large deviations, the quadratic control of the hyperglycemic model provides a much stronger feedback mechanism compared to the linear control found in the hypoglycemic counterpart. Therefore, lower parameter values are needed for the hyperglycemic model to achieve similar levels of feedback impact. As we extend our data to include pre-diabetic and diabetic individuals for future studies, we expect values for $A_1$ and $A_2$ to decrease as individual's state of health worsens. In extreme cases, we suspect that the structure of the model parameters will quickly deteriorate. Should these claims prove to be correct, a variation and/or combination of $A_1$ and $A_2$ may be used as potential biomarkers for early detection of glucose homeostatic dysfunction of individuals.

\section*{Conclusion}
As instances of diabetes continue to rise, it is imperative that the overall function of glucose regulation is examined compared to observing just a single-valued output in diagnostics such as the Oral Glucose Tolerance Test (OGTT) and Fasting Blood Glucose (FBG) test. Mathematical models of glucose homeostasis provide a powerful tool in the quantification of glucose homeostasis functionality. With this in mind, we investigated the viability of using a simple proportional-integral (PI) model to simulate glucose variation. By doing so we were able to tune the model to suit an individual’s particular homeostasis. We showed that our pipeline of data acquisition, peak extraction and data assimilation has the potential to give a cheap, non-invasive and quick assessment of  subject's glucose homeostasis, in spite of the parsimony of the model. We determined that there will be a unique, stable equilibrium for all individuals, no periodic solutions for constant glucose input, and practical upper bounds for both glucose concentration and the control variable based on the parameter ranges of healthy individuals. All conformed model parameters have low variances and are approximately normally distributed, with the sole exception of the time scale of the delay in the hypoglycemic case, which is closer to log-normal. This indicates a consistency in the assessment of glucose homeostasis between individuals that is independent of study location and measurement device. \newline

\noindent Since this method relies exclusively on an individual’s CGM data, it has the potential to provide insight into an individual’s glucose homeostasis functionality without requiring a visit to a health care provider. This model is formulated to only have a few features that change between individuals. As such, in order to check the regularity of glucose homeostasis, one only has to ensure that their parameter values are within the normal ranges as previously indicated. Due to this, the requirement to undergo fasting or blood tests (which is necessary for standard tests such as OGTT, FBG, and HbA1c) can be eliminated. Furthermore, our model is able to determine its parameter values given a single peak/trough in less then one second. This suggests that the proposed method is lightweight, computationally. Upon further code optimization, this can potentially be performed on portable devices, or translated to a web-based diagnostic tool where feedback can be provided in real time. \newline

\noindent Future work for this model involves analyzing the glucose dynamics of diagnosed type 2 diabetics and prediabetic individuals. The resulting parameter values could then be used as a biomarker of homeostasis dysfunction, potentially becoming another metric of disease diagnosis and prediction. Additionally, using a model that measures the glucose homeostasis itself and not the output may allow for earlier detection of a faulty homeostasis, ultimately resulting in diabetes prevention methods being implemented sooner.

\section*{Supporting information}

\paragraph*{S1 Thm.}
\label{S1_Thm}
{\bf Proof of Theorem \ref{thm:stability}.} On the stability of stationary points of the glucose homeostasis control model.
\begin{proof}
Under the assumption of a constant input, consider first that the input is greater than the resting metabolic rate, i.e. $F > A_3$, then $G > 0$. Therefore the stationary point corresponding to $e^* < 0$ no longer exists. If the solution is of class $C^2 (D)$, linearizing around $(u^*, e^*)$ to get the Jacobian matrix
\begin{equation}\label{Jacobian}
J = 
\begin{bmatrix}
-\lambda + A_1 \frac{\partial f}{\partial u}|_{(u^*,e^*)} & \lambda (A_1 + A_2) + A_1 \frac{\partial f}{\partial e}|_{(u^*,e^*)} \\ \frac{\partial f}{\partial u}|_{(u^*,e^*)} & \frac{\partial f}{\partial e}|_{(u^*,e^*)}
\end{bmatrix}
\end{equation}
where
\begin{alignat}{2}
\frac{\partial f}{\partial u} &= - (e^* + \bar{e}), &\qquad 
\frac{\partial f}{\partial e} &= - u^*.\label{partial_f}
\end{alignat}
Thus
\begin{align*}
\text{tr}J &= -\lambda -A_1  (e^* + \bar{e}) - u^* < 0 \\
\det J &= \lambda u^*+ \lambda (A_1 + A_2)(e^*+\bar{e}) > 0
\end{align*}
Therefore the stationary point is a stable node or focus for $G>0$. If $G < 0$, the linearization has the properties $\text{tr} J = -\lambda - A_1 \bar{e} < 0$ and $\det J =  \lambda \bar{e}(A_1 + A_2) > 0$ which yields a stable node or focus. Therefore the stationary point $(u^*, e^*)$ is always asymptotically stable.
\end{proof}

\paragraph*{S2 Thm.}
\label{S2_Thm}
{\bf Proof of Theorem \ref{thm:trapping}}. On determining a trapping region under the assumption of a constant input.
\begin{proof}
	For ease of notation, we introduce the auxiliary variable $v=u-A_1 e$. In the coordinate system $(v,e)$, the ellipsoid implicitly defined by $\mbox{d}L/\mbox{d}t=0$ has its major and minor axes aligned with the coordinate axes. It is enclosed by a rectangle with sides at $v=v_{\rm min},\,v_{\rm max}$ and $e=e_{\rm min},\,e_{\rm max}$. A direct computation shows that $v_{\rm min}<-v_{\rm max}<0<v_{\rm max}$. Since $L$ is a monotonically increasing function of $e$ on the domain $D$, the maximal value of $L$ over the rectangle is then obtained at
	\begin{multline*}
	(v_{\rm min},e^-_{\rm max})=\Big(-\frac{A_2 \bar{e}}{2}-\frac{1}{2\bar{e}}\sqrt{\frac{A_2}{A_1}}\sqrt{A_1 A_2 \bar{e}^4 +[A_1\bar{e}^2+G]^2},\\
	-\frac{\bar{e}}{2}+\frac{G}{2A_1\bar{e}}+\frac{1}{2A_1\bar{e}}\sqrt{A_1 A_2\bar{e}^4+[A_1\bar{e}^2+G]^2}\Big)
	\end{multline*}
	if $G\leq -A_2 \bar{e}^2/4$, in which case $e_{\rm max}<0$, and at
	\begin{multline*}
	(v_{\rm min},e^+_{\rm max})=\Big(-\frac{A_2 \bar{e}}{2}-\frac{1}{2\bar{e}}\sqrt{\frac{A_2}{A_1}}\sqrt{A_1 A_2 \bar{e}^4 +[A_1\bar{e}^2+G]^2},\\
	-\frac{\bar{e}}{2}+\frac{1}{2A_1}\sqrt{A_1^2\bar{e}^2+A_1 A_2 \bar{e}^2+4 A_1 G}\Big)
	\end{multline*}
	if $G> -A_2 \bar{e}^2/4$, in which case $e_{\rm max}>0$.
	At this point we have $L(v_{\rm min}+A_1 e^{\pm}_{\rm max},e^{\pm}_{\rm max})=C_{\pm}$ with $C_{\pm}$ as stated in the theorem. 
\end{proof}

\paragraph*{S1 Cor.}
\label{S1_Cor}
{\bf Proof of Corollary \ref{cor1}.} On determining a trapping region for any bounded input function.
\begin{proof}
	The curve $L = C$ takes the form $C = C_-$ if $G < -A_2 \bar{e}^2 / 4$ and $C = C_+$ if $G \geq -A_2 \bar{e}^2 / 4$, and defines the boundary of a trapping region to Eq \ref{udot}--\ref{edot2}. If $G_{\min} \geq -A_2 \bar{e}^2 / 4$, then $C = C_+$ everywhere in the domain. Notice that $C_+$ is a strictly increasing function of $G$, therefore its maximum is $C_+(G_{\max})$.
	
	Suppose now that $G_{\min} < -A_2 \bar{e}^2 / 4$. A direct computation of the second derivative of $C_-$ gives
	\begin{equation}
	\frac{\mbox{d}^2 C_-}{\mbox{d} G^2} = (2 \bar{e}^2 \sqrt{A_1 A_2} + 4 \bar{e} \lambda) \left( \frac{A_1 A_2 \bar{e}^4}{[A_1 A_2 \bar{e}^4 + (A_1 \bar{e}^2 + G)^2]^{\frac{3}{2}}} \right) + 2 > 0
	\end{equation}
	with the inequality holding for all values of $G$. Therefore the maximum of $C_-$ occurs at one of its endpoints. Since $C$ is continuous over all of $G$, and $C_+$ is strictly increasing, then $C_-(-A_2 \bar{e}^2 / 4) < C_+(G_{\max})$, meaning that the right endpoint (relative to values of $G(t)$) of $C_-$ cannot be a maximum of $C$. Thus the largest value of $C$ must be the larger of $C_-(G_{\min})$ or $C_+(G_{\max})$ as required.
\end{proof}

\paragraph*{S1 Table.}
\label{S1_Table}
{\bf Ranges of model parameters.} Model parameter ranges for hyperglycemic and hypoglycemic cases with their respectively $p$-values of the Shapiro-Wilk test for normality. The null-hypothesis $H_0$ states that the model parameters are normally distributed. The decision to reject or not reject $H_0$ is based on a critical $p$-value of $0.05$. The units of each parameter are listed in Table \ref{tab:param}.
\begin{table}[htb!]
	\begin{adjustwidth}{-.9in}{0in}
		\centering
		\renewcommand*{\arraystretch}{2.2}
		\begin{tabular}{|c|c|c|c|c||c||c|}
			\cline{2-7}
			\multicolumn{1}{c|}{}& & Klick Pilot & Klick Follow-up & Stanford & Overall & $p$-value \\ \cline{2-7}\hline			\multirow{3}{*}{\rotatebox{90}{Hyperglycemic}}
			& $A_1$  & $0.0077 \pm 0.0020$ & $0.0074 \pm 0.0025$ & $0.0064 \pm 0.0019$ & $0.0073 \pm 0.0022$ & $0.180$ \\
			\cline{2-7}
			& $A_2$  & $0.0029 \pm 0.0009$ & $0.0037 \pm 0.0007$ & $0.0028 \pm 0.0009$ & $0.0033 \pm 0.0009$ & $0.135$ \\ \cline{2-7} 
			& $\lambda$  & $0.0290 \pm 0.0011$ & $0.0293 \pm 0.0012$ & $0.0286 \pm 0.0009$ & $0.0289 \pm 0.0009$ & $0.175$ \\ \hline\hline
			\multirow{4}{*}{\rotatebox{90}{Hypoglycemic}}
			& $A_1$  & $0.0227 \pm 0.036$ & $0.0197 \pm 0.0053$ & $0.0217 \pm 0.0037$ & $0.0208 \pm 0.0048$ & $0.221$ \\ \cline{2-7} 
			& $A_2$  & $0.0364 \pm 0.0041$ & $0.0364 \pm 0.0037$ & $0.0319 \pm 0.0032$ & $0.0354 \pm 0.0041$ & $0.143$ \\ \cline{2-7} 
			& $\lambda$  & $0.0417 \pm 0.00568$ & $0.0391 \pm 0.0052$ & $0.0363 \pm 0.0055$ & $0.0395 \pm 0.0059$ & $0.022$ \\ \cline{2-7}
			& $\log \lambda$ & $-1.3648 \pm 0.0548$ & $-1.4112 \pm 0.0591$ & $-1.4443 \pm 0.0629$ & $-1.4078 \pm 0.0649$ & $0.419$ \\ \hline\hline
		\end{tabular}
	\end{adjustwidth}
\end{table}

\newpage
\paragraph*{S1 Fig.}
\label{S1_Fig}
{\bf Comparison of model parameter clustering between hyperglycemic and hypoglycemic episodes.} Mean (by subject) model parameter values for peaks and troughs found in hyperglycemic and hypoglycemic cases respectively. Parameter values for hyperglycemic cases are depicted by the circle markers, and in contrast, star markers represent parameter values for hypoglycemic cases. The units of $A_1$ and $A_2$ are both in $\text{litre}/(\text{min} \times \text{mmol})$.
\begin{figure*}[h]
	\centering
	\includegraphics[width=0.8\textwidth]{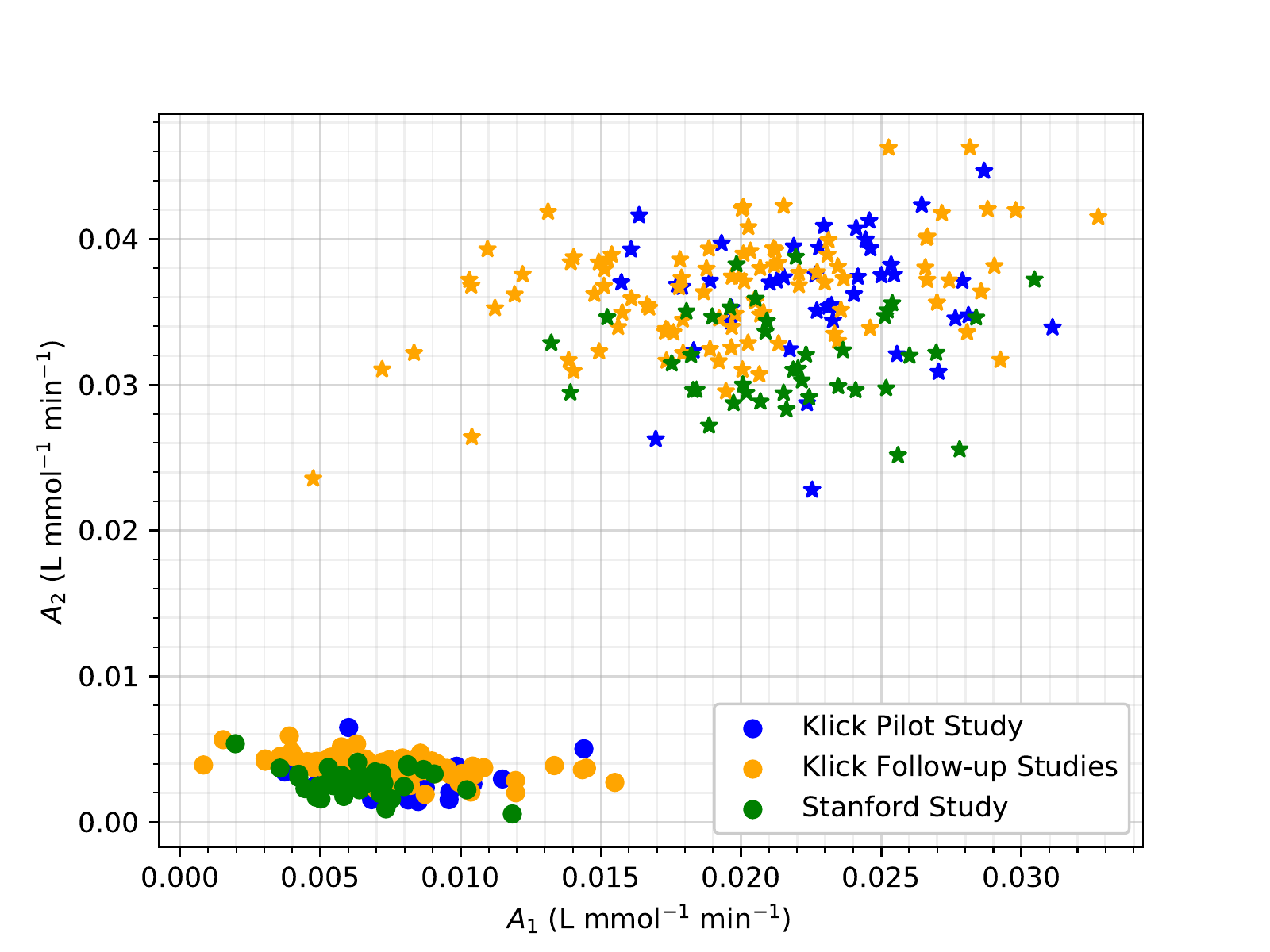}
\end{figure*}

\newpage
\paragraph*{S2 Fig.}
\label{S2_Fig}
{\bf Normalized model parameter values for hyperglycemic cases.} In the left column, the blue columns form the histogram for each normalized parameter value distributed into ten bins of equal width. The red curve denotes the normal distribution with mean and variance that matches the sample mean and variance of each corresponding parameter. The right column are Q-Q plots for each model parameter. The rows, from top to bottom, correspond to the parameters $A_1$, $A_2$, and $\lambda$. The units of the parameters are $[A_1] = [A_2] = \text{litre}/(\text{min} \times \text{mmol})$, and $[\lambda] = 1/\text{min}$.
\begin{figure*}[h]
	\centering
	\includegraphics[width=0.495\textwidth]{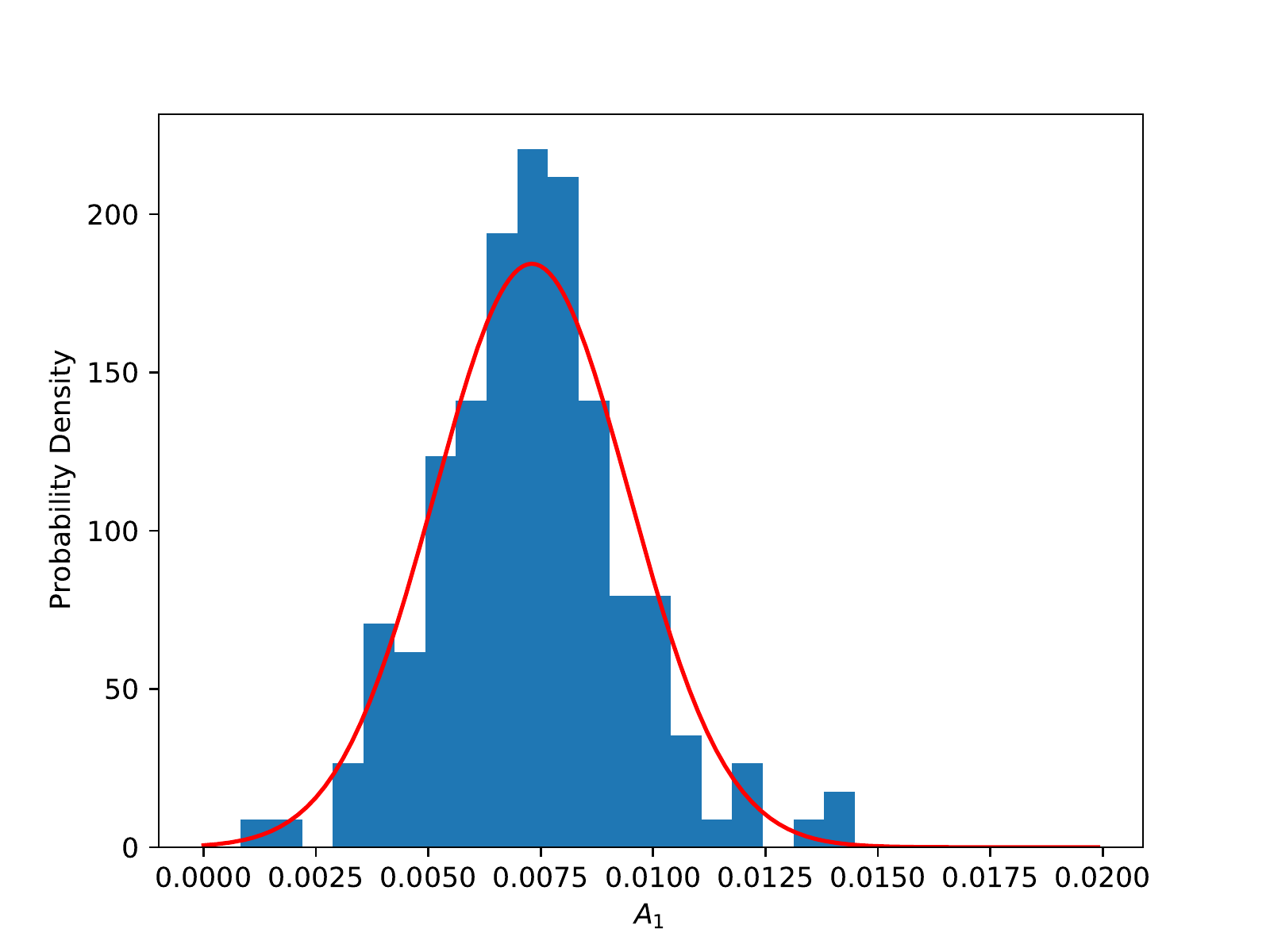}
	\hfill
	\includegraphics[width=0.495\textwidth]{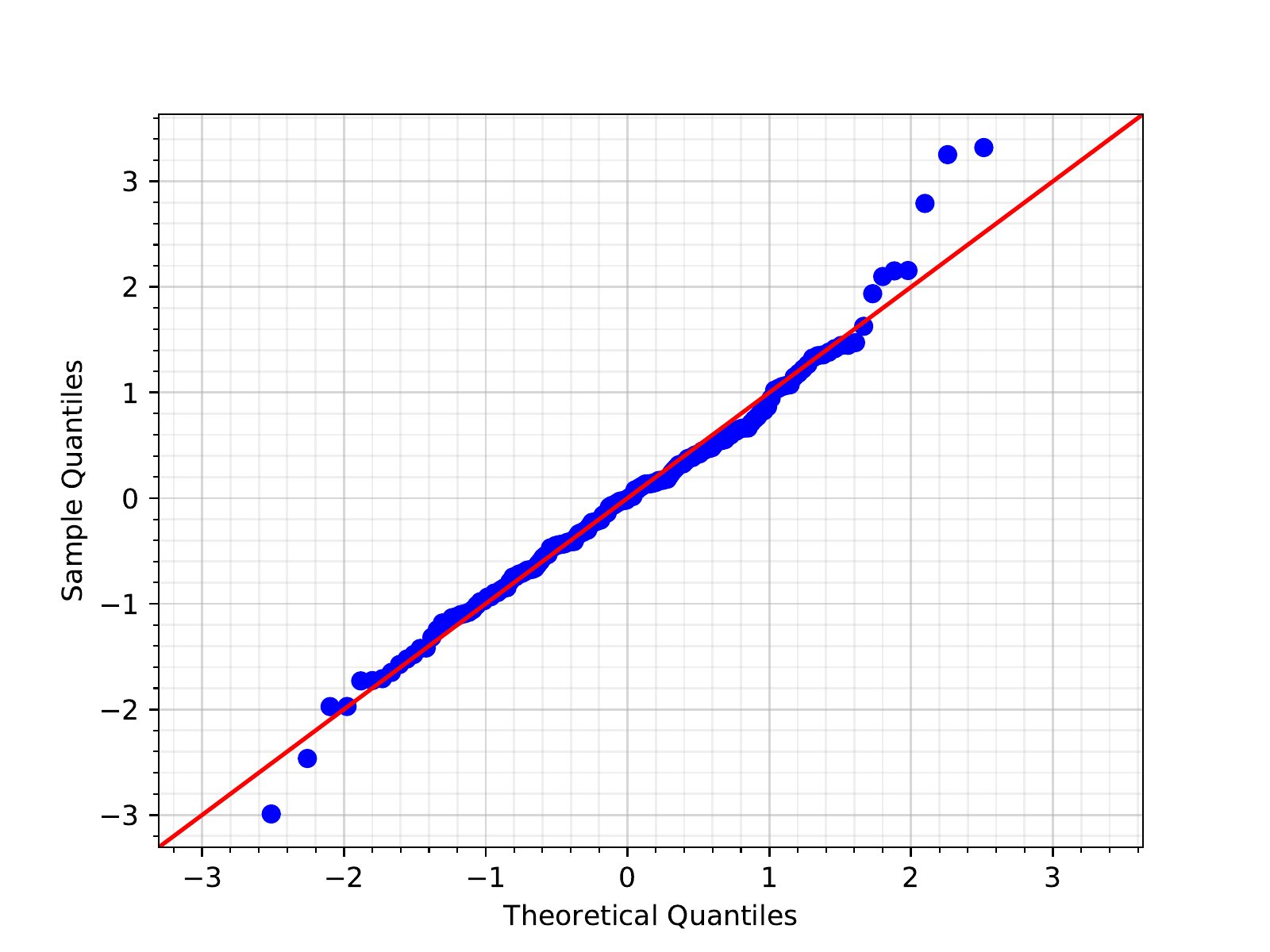}\\
	%	\vskip\baselineskip
	\includegraphics[width=0.495\textwidth]{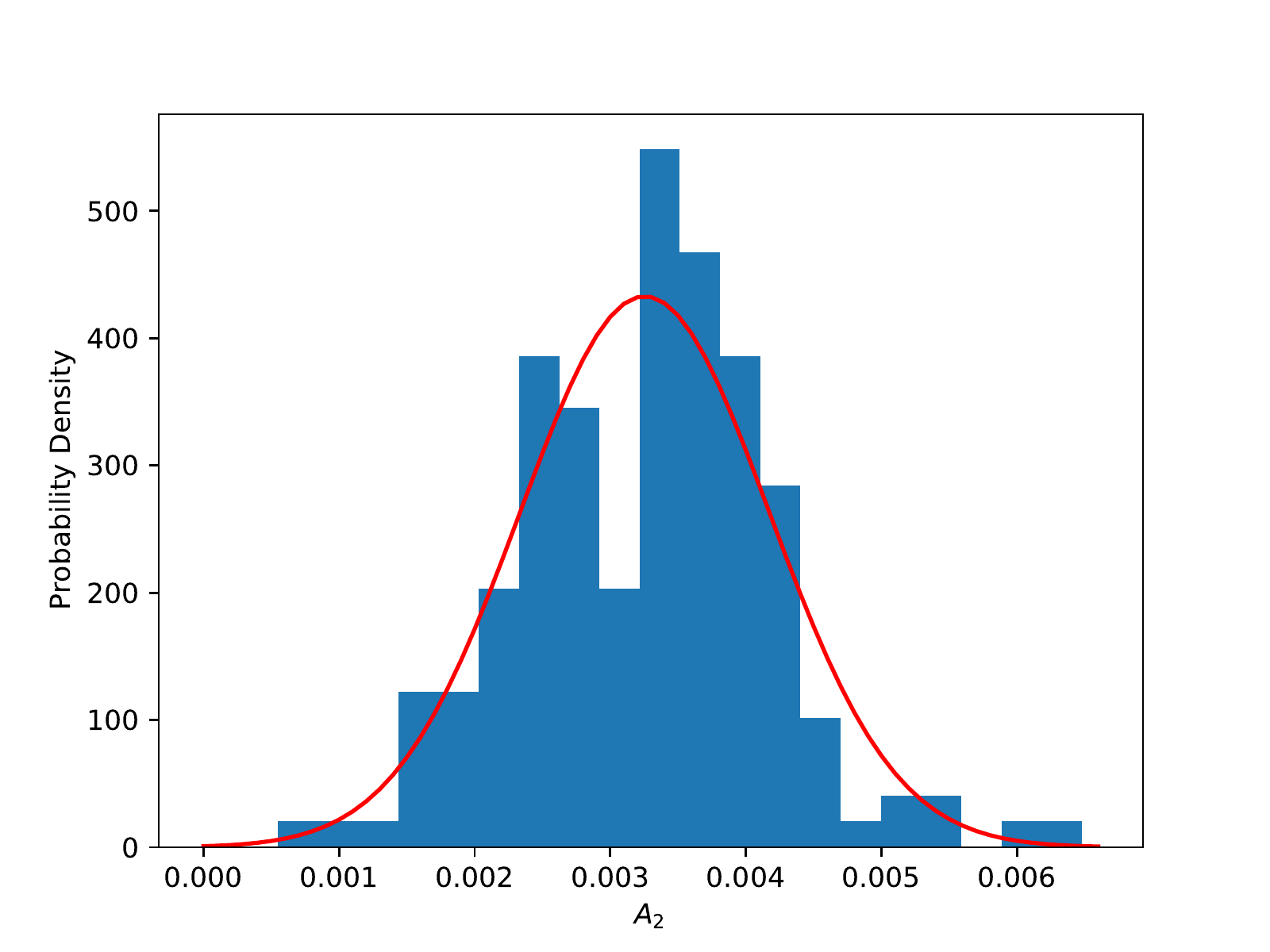}
	\hfill
	\includegraphics[width=0.495\textwidth]{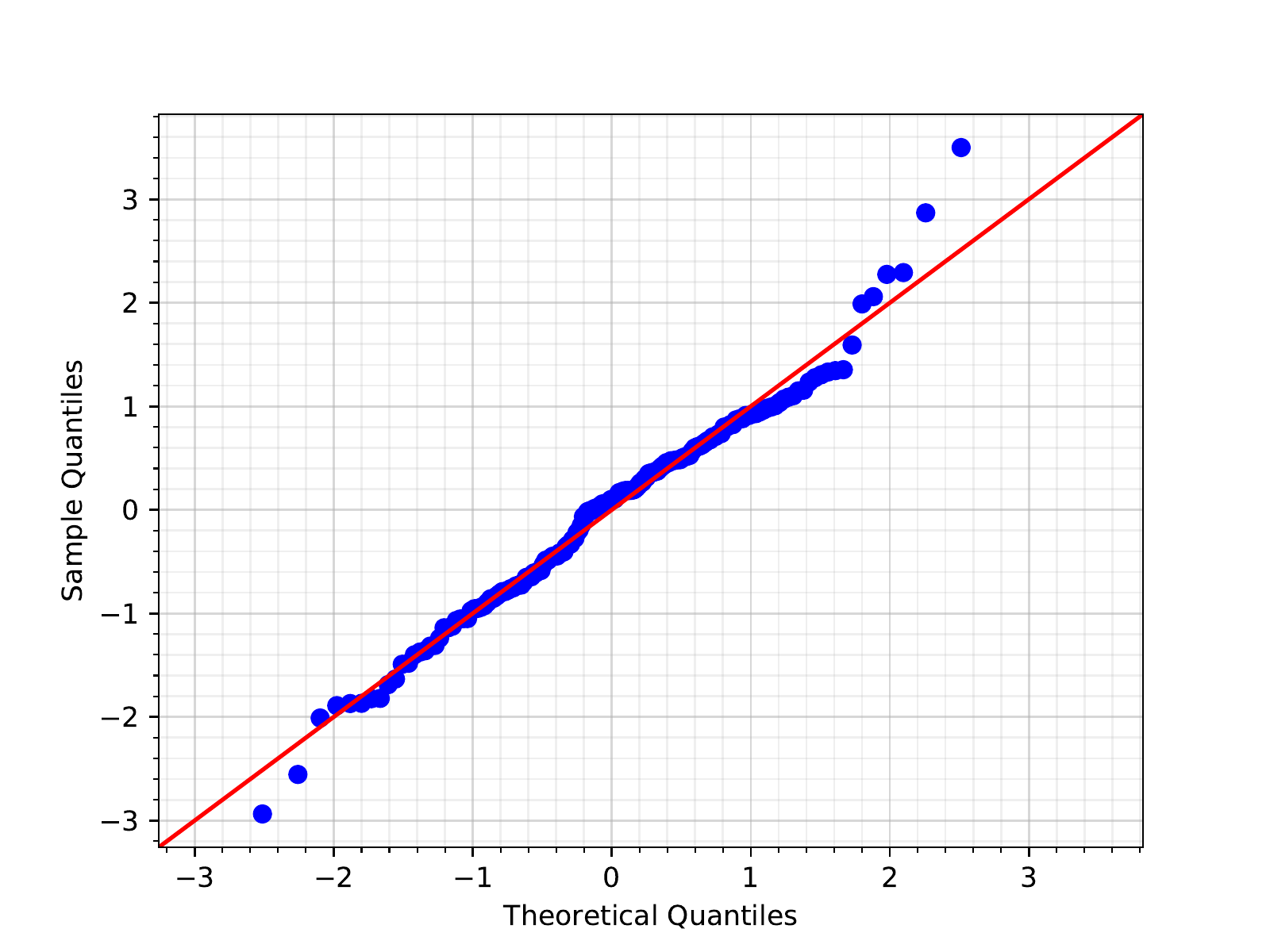}\\
	%	\vskip\baselineskip
	\includegraphics[width=0.495\textwidth]{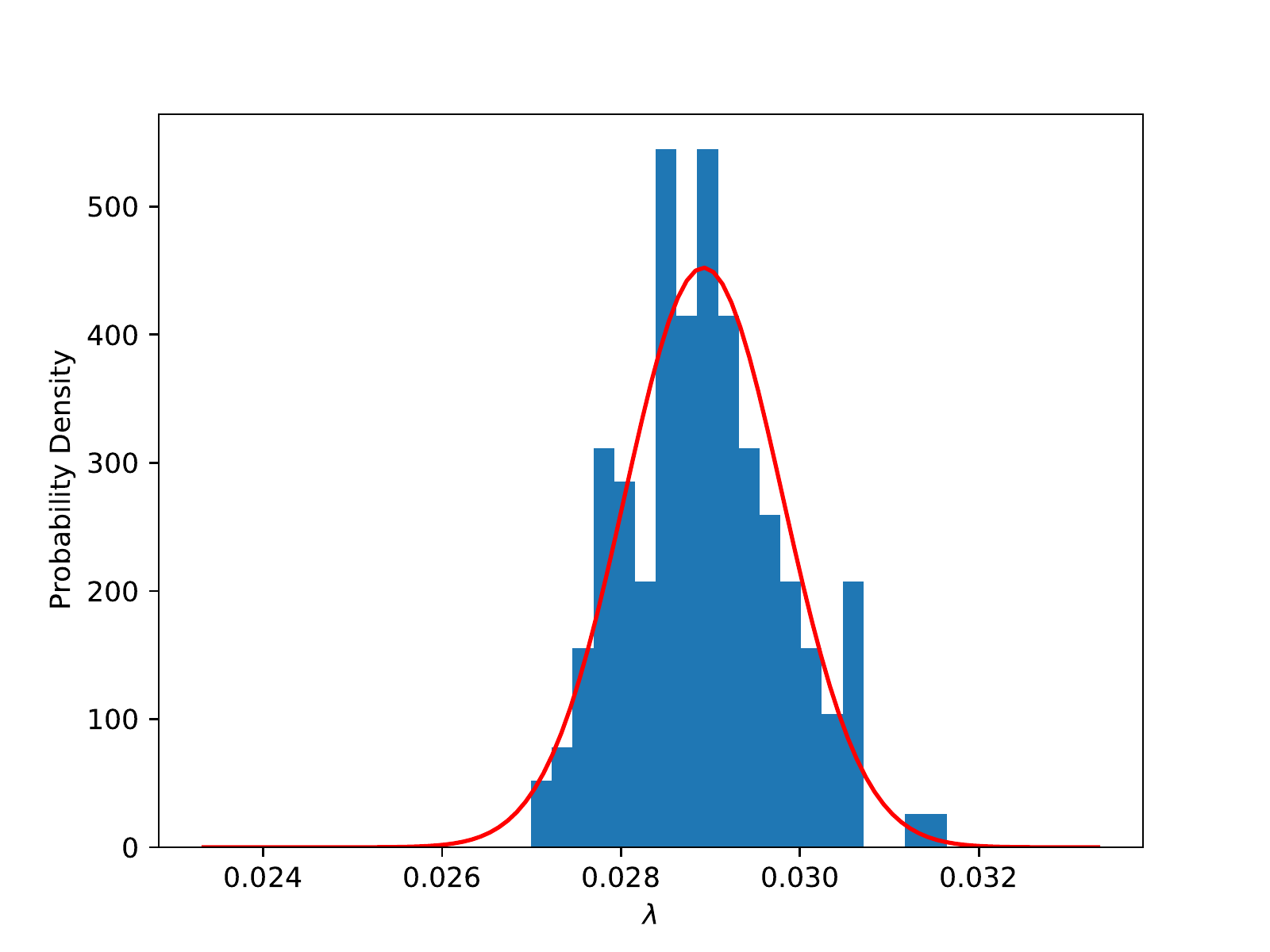}
	\hfill
	\includegraphics[width=0.495\textwidth]{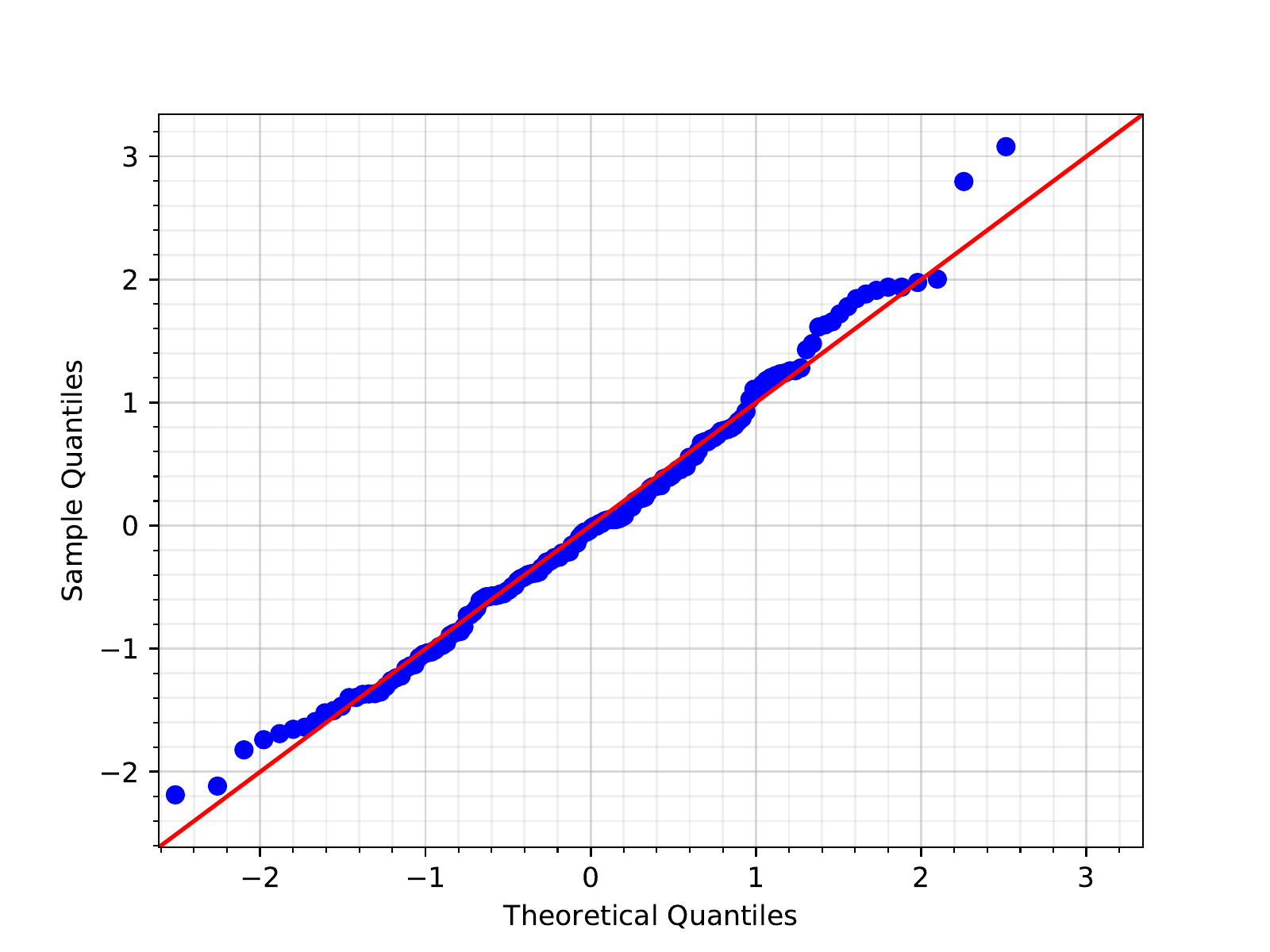}
\end{figure*}

\newpage
\paragraph*{S3 Fig.}
\label{S3_Fig}
{\bf Normalized model parameter values for hypoglycemic episodes.} In the left column, the blue columns form the histogram for each normalized parameter value distributed into ten bins of equal width. The red curve denotes the normal distribution with mean and variance that matches the sample mean and variance of each corresponding parameter. The right column are Q-Q plots for each model parameter. The rows, from top to bottom, correspond to the parameters $A_1$, $A_2$, and $\log \lambda$. The units of the parameters are $[A_1] = [A_2] = \text{litre}/(\text{min} \times \text{mmol})$, and $[\log \lambda] = \log(1/\text{min})$.
\begin{figure}[h]
	\centering
	\includegraphics[width=0.495\textwidth]{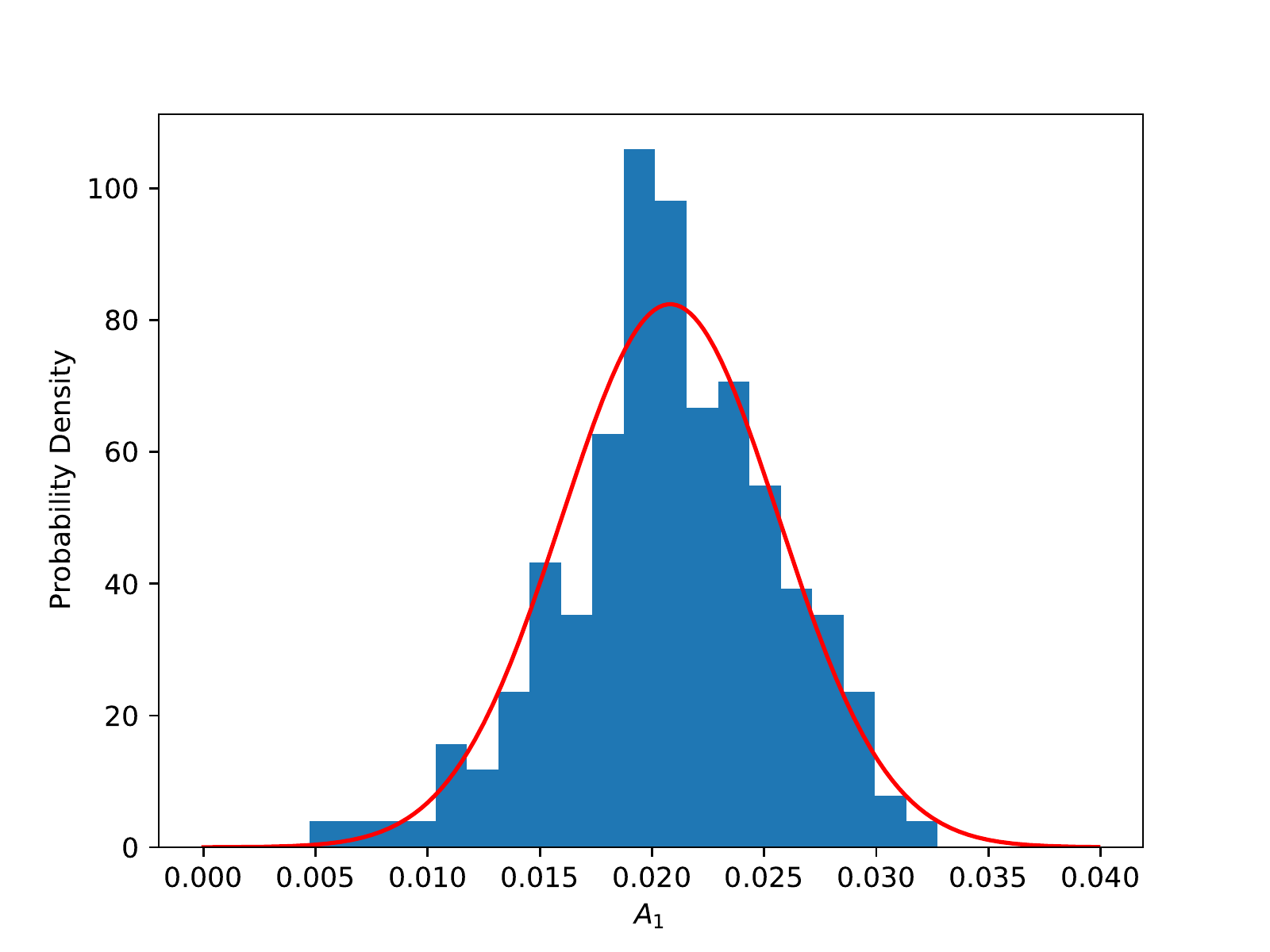}
	\hfill
	\includegraphics[width=0.495\textwidth]{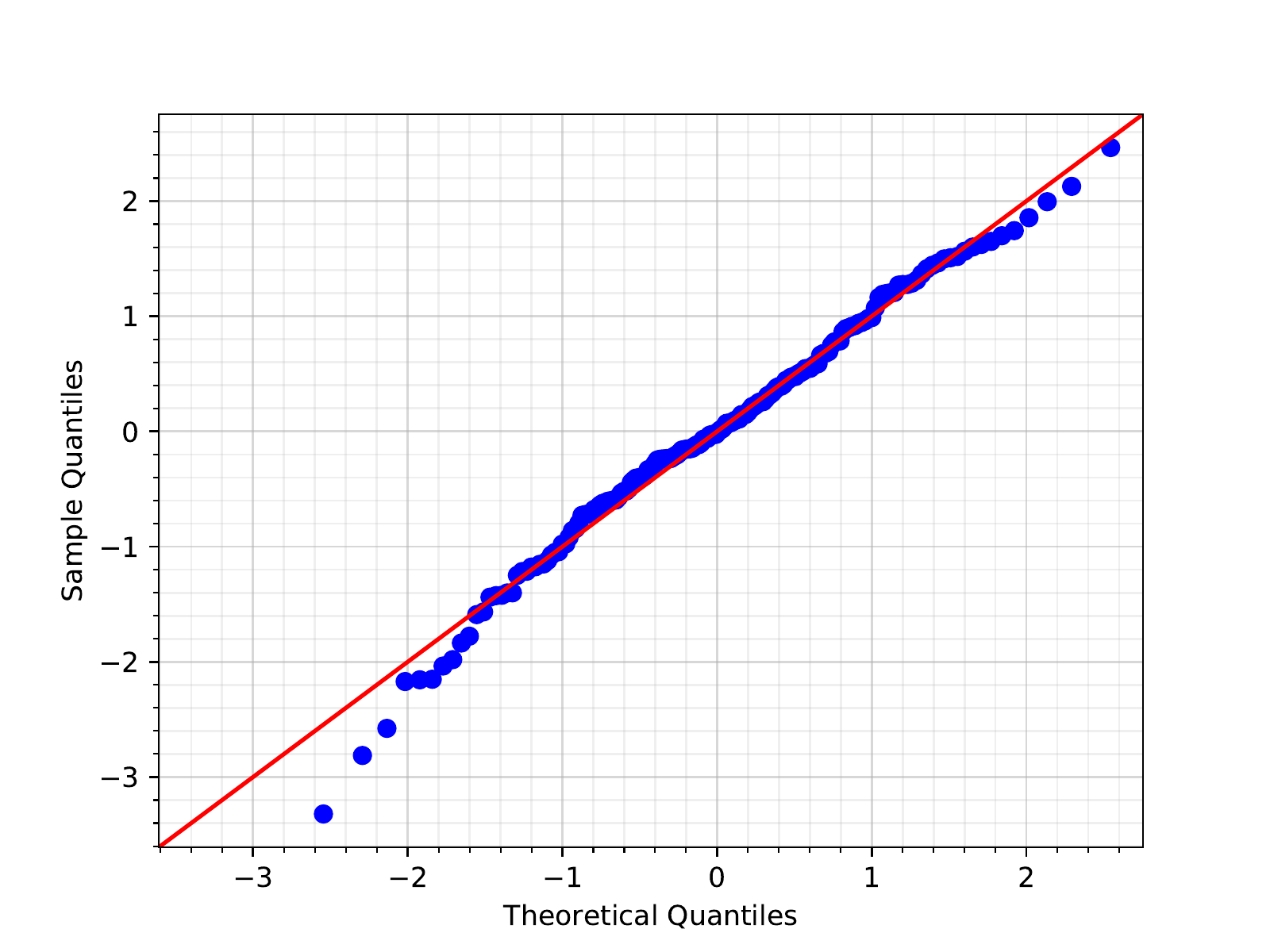}\\
	%	\vskip\baselineskip
	\includegraphics[width=0.495\textwidth]{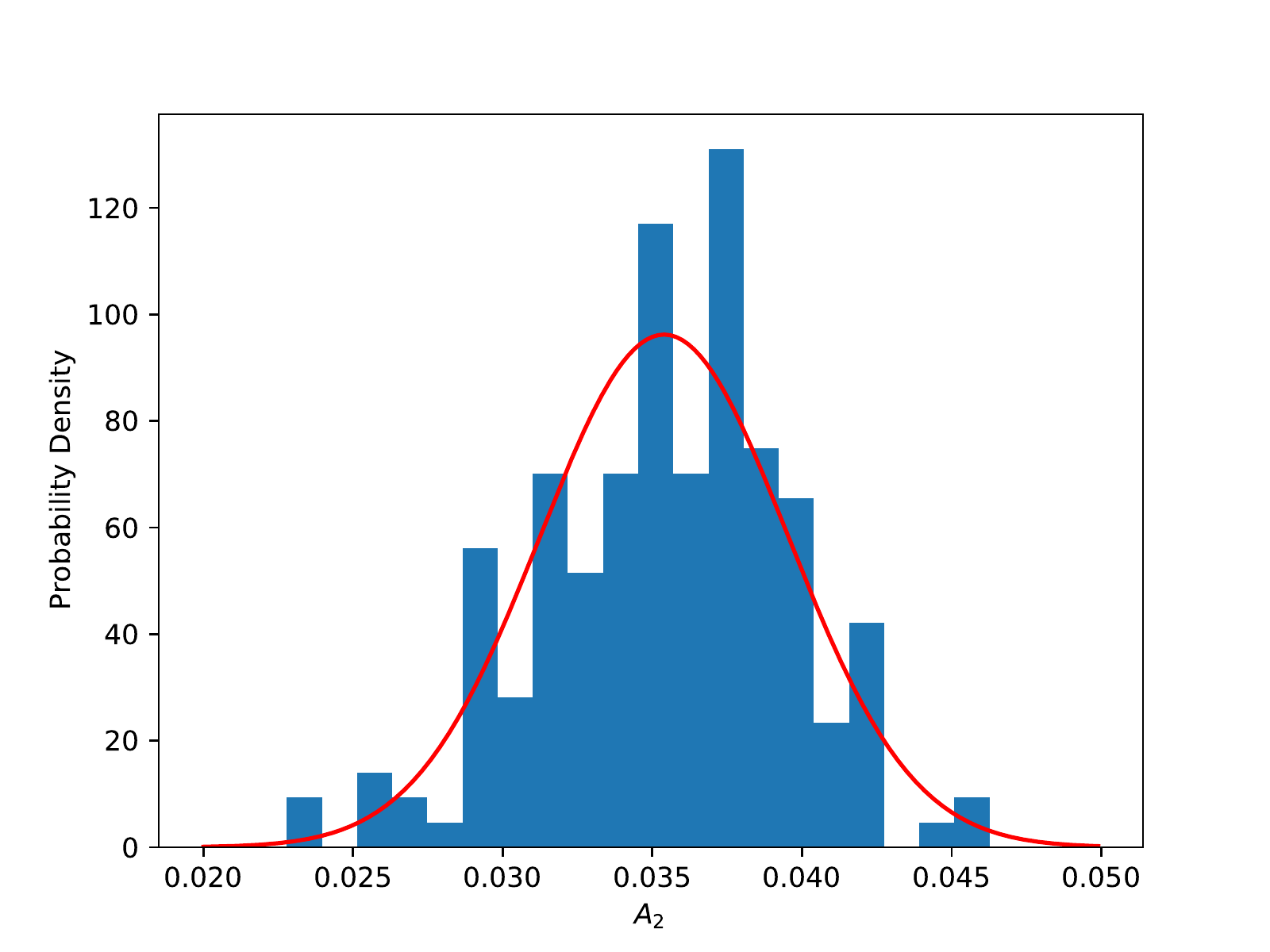}
	\hfill
	\includegraphics[width=0.495\textwidth]{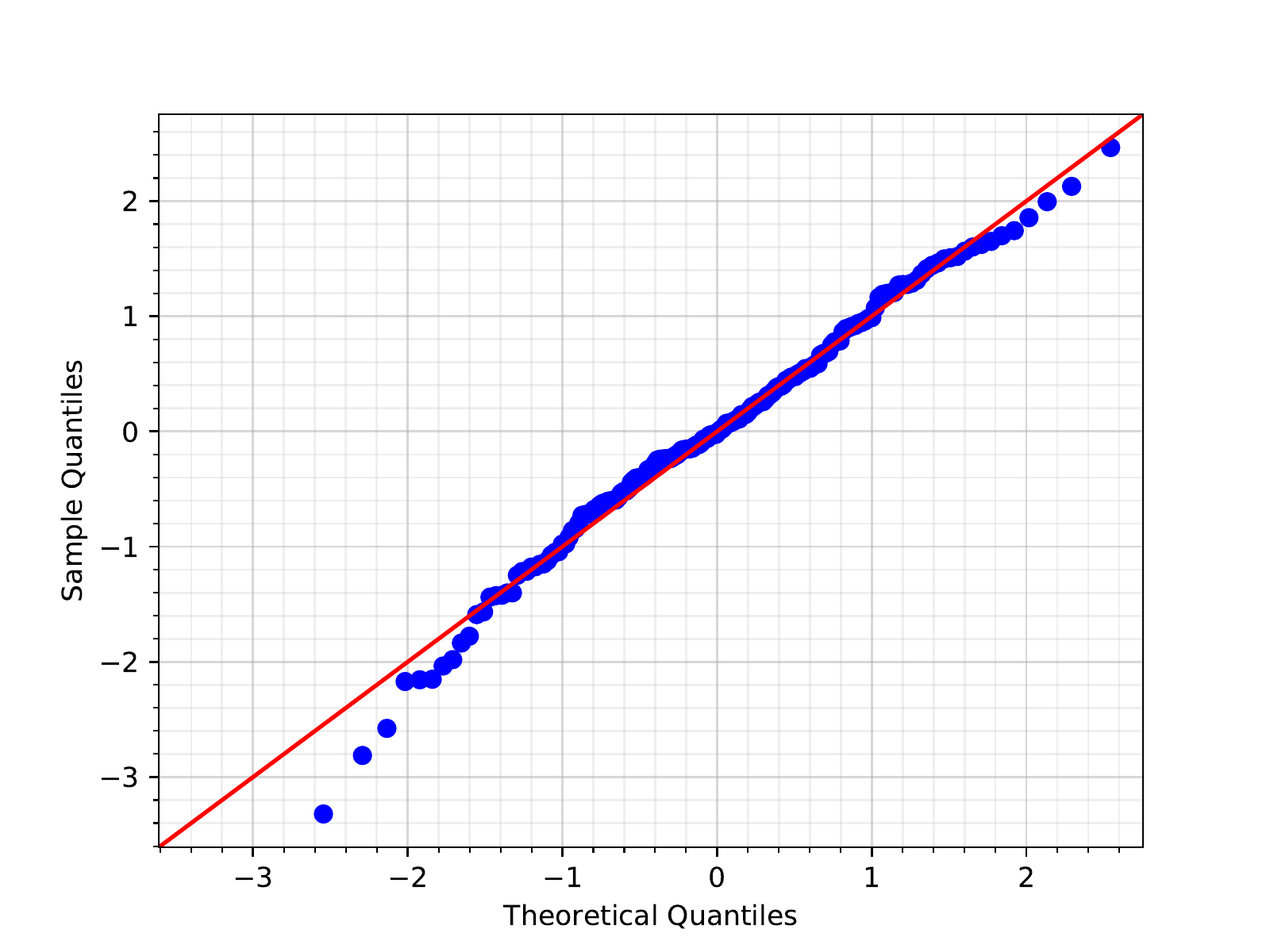}\\
	%	\vskip\baselineskip
	\includegraphics[width=0.495\textwidth]{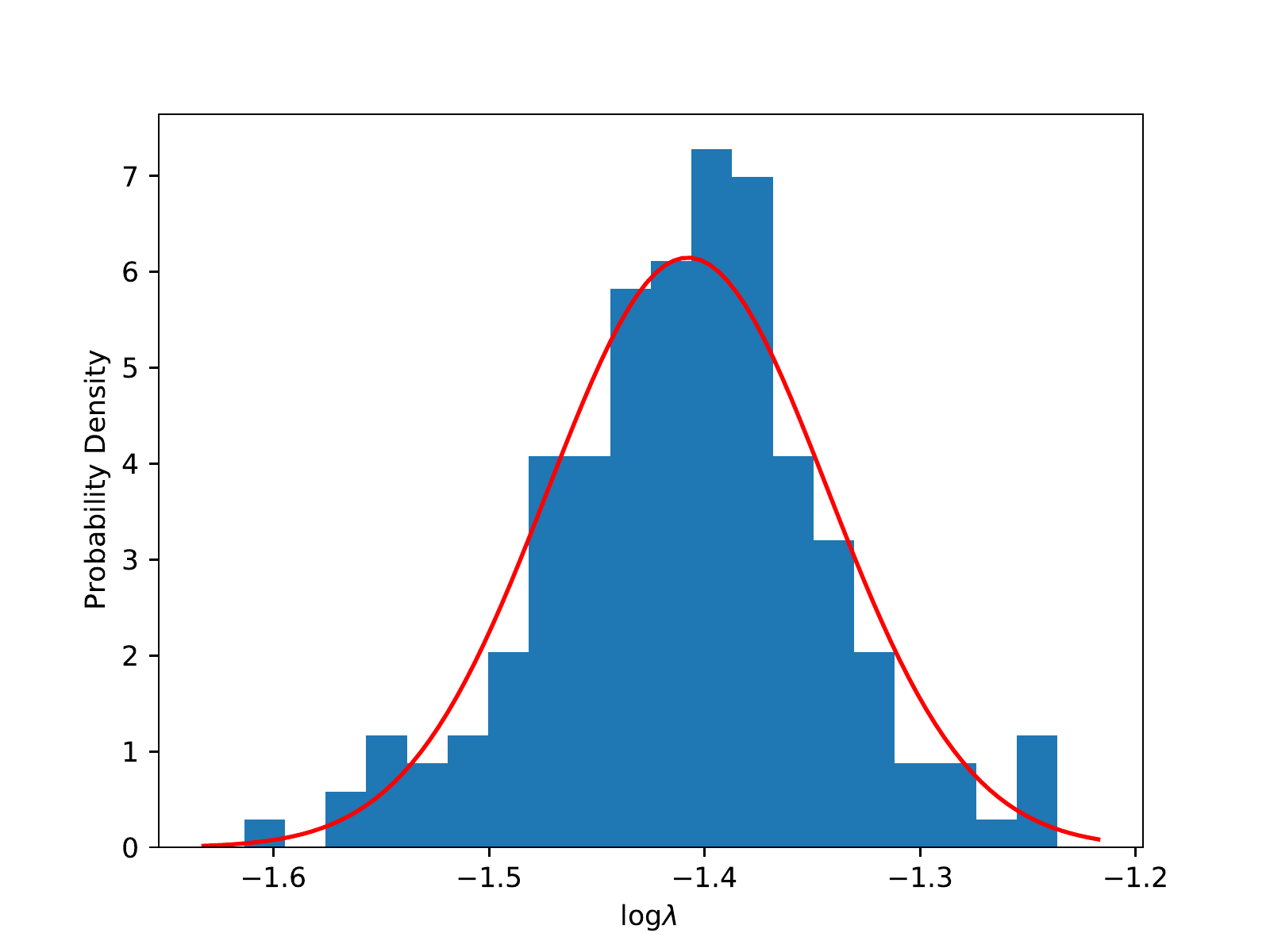}
	\hfill
	\includegraphics[width=0.495\textwidth]{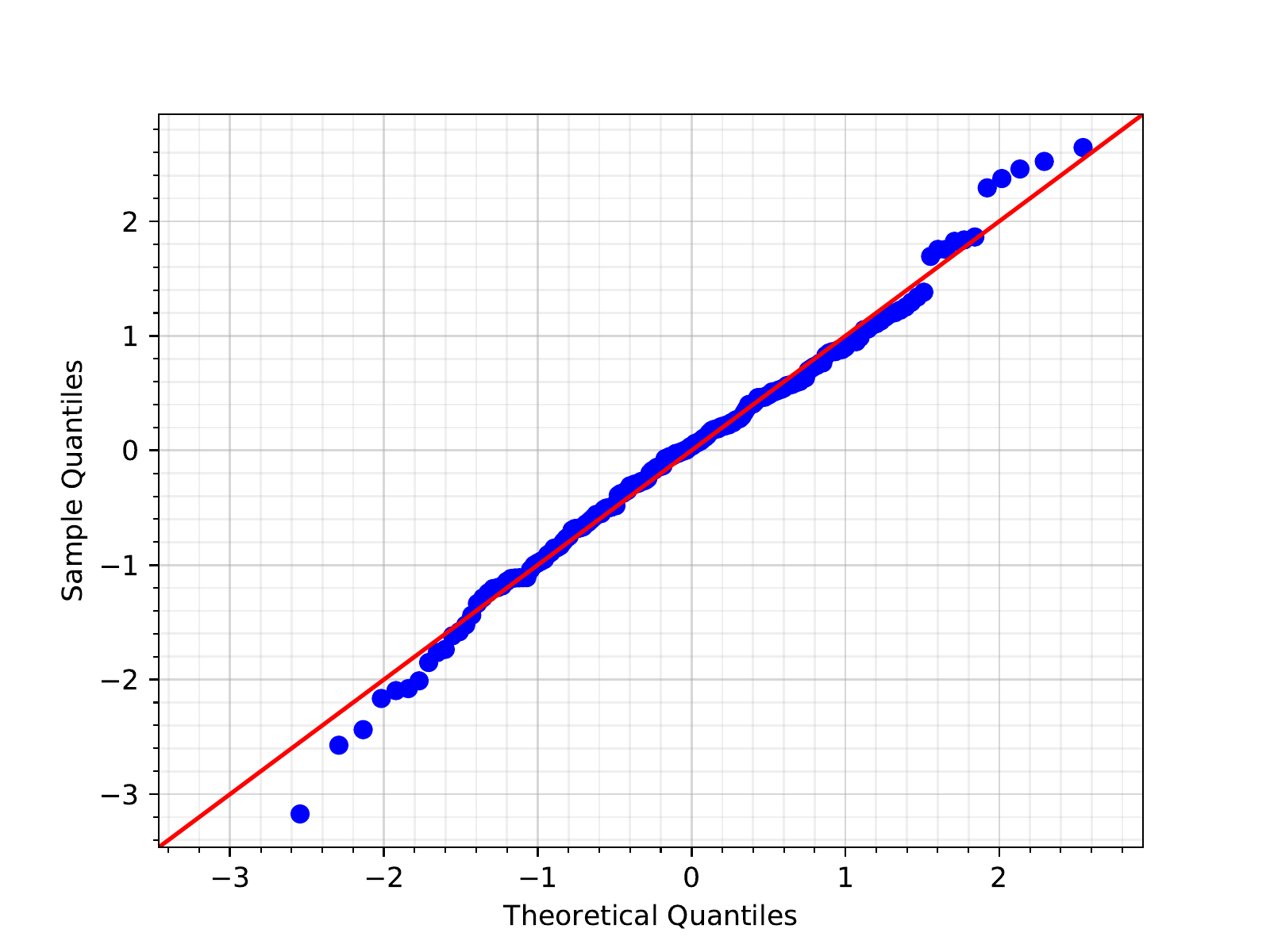}
\end{figure}

\paragraph*{S1 Data.}
\label{S1_Data}
{\bf Blood glucose data of Klick Pilot Study.} Blood glucose data of Klick Pilot Study measured using the Freestyle Libre Flash Glucose device. The data are presented in comma separated values (.csv) format for each healthy individual $(N = 42)$. The filenames represent the unique identification of the individual to preserve anonymity.

\newpage
\paragraph*{S2 Data.}
\label{S2_Data}
{\bf Blood glucose data of Klick Follow-up Studies.} Blood glucose data of Klick Pilot Study measured using the Freestyle Libre Flash Glucose device. The data are presented in comma separated values (.csv) format for each healthy individual $(N = 100)$. The filenames represent the unique identification of the individual to preserve anonymity.

\section*{Acknowledgments}
This research was supported by the Mitacs Accelerate program (IT17808) and co-sponsored by Klick Inc. in Toronto, Canada. The authors thank Adam Palanica and Anirudh Thommandram of Klick Inc. for their work in the procurement of CGM data. We also thank Kathryn Mei-Yu Chen of Ontario Tech University for her work on the data streamlining framework.

%\nolinenumbers

\end{document}